\theoremstyle{Proposition}
\newtheorem{theorem}{Theorem}[section]
\newtheorem{lemma}[theorem]{Lemma}
\theoremstyle{definition}
\newtheorem{definition}[theorem]{Definition}
\theoremstyle{remark}
\newtheorem{remark}[theorem]{Remark}
\DeclareMathOperator{\argmin}{{argmin}}
\newcommand{\R}{\mathbb{R}}
\newcommand{\bO}{\mathcal{O}}
\newcommand{\E}[1]{\mathbb{E} \left [{#1}\right ]}
\begin{document}
\title{Stochastic Gradient Descent with Polyak's learning rate}
\author{Adam M. Oberman and Mariana Prazeres}
\date{\today}

\begin{abstract}
Stochastic gradient descent (SGD) for strongly convex functions converges at the rate $\bO(1/k)$.  However, achieving good results in practice requires tuning the parameters (for example the learning rate) of the algorithm.  In this paper we propose a generalization of the Polyak step size, used for subgradient methods,  to Stochastic gradient descent.  We prove a non-asymptotic convergence at the rate $\bO(1/k)$ with a rate constant which can be better than the corresponding rate constant for optimally scheduled SGD.  We demonstrate that the method is effective in practice, and on  convex optimization problems and on training deep neural networks, and compare to the theoretical rate. 
\end{abstract}
\thanks{research supported by:  
AFOSR FA9550-18-1-0167 (A.O.)}
\keywords{Stochastic Gradient Descent, learning rate, Polyak's learning rate, optimization, strong convexity} 
\maketitle

\section{Introduction}
Stochastic Gradient Descent (SGD) \cite{robbins1951,kalman1960new} is a widely used optimization algorithm  due to its ubiquitous use in machine learning \cite{bottou2016optimization}.  Convergence rates are available in a wide setting \cite{lacoste2012simpler,bottou2016optimization,qian2019sgd}.  To achieve the optimal convergence rate requires using an algorithm with parameters, for example, a scheduled learning rate, which depends on knowledge of parameters of the function which are often not available.  In this article, we propose an adaptive time step method for SGD, based on a generalization of the Polyak time step for the subgradient method, for SGD.  We prove a that this method achivies non-asymptotic convergence rate which can have a better rate constant than the one for scheduled SGD.

 Consider a differentiable function $f:\mathcal{X} \to \R$ depending on parameters $x\in\mathcal{X}$. Then, the problem at hand,
 \begin{equation}
 \min_{x\in\mathcal{X}} f(x),
 \end{equation}
may be solved approximately using the SGD step
\begin{align}\label{ASGD}\tag{SGD}
	x_{k+1} 
	= x_k - h_k \nabla_{mb} f(x_k),
\end{align}
where $h_k > 0 $ is the learning rate and $\nabla_{mb} f$ is a stochastic gradient. 

SGD is the method of choice for large scale machine learning problems \cite{bottou1991stochastic}.  When SGD is combined with momentum  \cite{polyak1964some,nesterov2013introductory} empirical performance is improved, but there is still no theoretical justification for the improvement~\cite{kidambi2018insufficiency}.  Other popular stochastic optimization algorithms that use different forms of averaging or variance reduction are still being studied and developed \cite{bottou2016optimization}.  However, in this work, we focus on  SGD without momentum, \eqref{ASGD}. 

The optimal convergence rate for SGD  in the strongly convex case is $\bO(1/k)$ \cite{rakhlin2011making, agarwal2009information, raginsky2011information}.  A learning rate schedule that achieves such rate is of the form $h_k = \bO(1/k)$, see \eqref{LR} in Section \ref{sec: asgd}. However, achieving the optimal rate  is difficult in practice, since it depends on properties of the function, such as the strong convexity constant,  which may not be known. Or, in the case of non-convex problems, non-existent.  Hence, the rate can be slower than optimal.  In practice, the learning rate is tuned to the problem at hand, or adaptive methods are used \cite{duchi2011adaptive, hinton2012rmsprop,zeiler2012adadelta, kingma2014adam,wu2018wngrad}. These methods often perform well in practice, but they lack optimal rates of convergence or, in some cases, they lack convergence guarantees \cite{reddi2018convergence}. In the deep learning setting,  SGD with a hand tuned learning rate schedule tends to outperform these methods in terms of generalization \cite{hardt2015train, wilson2017marginal}. 

However, while the rate of convergence $\bO(1/k)$ cannot be improved, the rate constant can be.  Improving the rate constant can be worthwhile if it leads to faster convergence in practise, as we demonstrate below.   In this work, we study a learning rate formulation  based on a generalization of Polyak's learning rate \cite[Chapter 5.3.]{polyak1987introduction} to the stochastic setting. Polyak's learning rate (see also \cite[Page 204]{beck2017first}) is commonly used for the subgradient method.  It is defined as
\begin{equation}\label{PolyakSub}
x_{k+1} = x_ k -h(x_k)\partial f(x_k), 
\qquad
h(x_k) =   \frac{f(x_k)-f^*}{||\partial f(x_k)||^2}.
\end{equation} 
In this case, the learning rate depends on an estimate of the value of $f^* = \min_{x} f(x)$.   In some applications, the $f^*$ value is known (or zero), and the method can be applied without estimation.  A principled method for estimation, which is provably convergent with the same rate, is provided in~\cite[Chapter 4.2]{boyd2013subgradient}.  The estimation involves an auxiliary sequence $\gamma_k$, leading to the learning rate
\begin{equation}
h^{est}( x_ k) = \frac{f(x_k)-f_k^{best}+\gamma_k}{||\partial f(x_k)||^2},
\end{equation} 
where $f_k^{best} = \min_{j\leq k} f(x_j)$. Convergence follows provided  
\[
\gamma_k>0, \qquad 
\gamma_k\rightarrow 0, \qquad 
\sum_{k=1}^{\infty} \gamma_k = \infty.
\]
In particular, $f_k^{best} \rightarrow f^*$ as $k\rightarrow\infty$  in this case. 
Additional methods for estimating $f^*$ can be found in \cite{shor2012minimization,boyd2013subgradient} and \cite{kim1990variable}.  In high-dimensional machine learning problems,  $f^*$ can be considered a hyper-parameter that requires tuning - not difficult in general. For example, one can observe the minimum value achieved,  $f_{min}$ on a previously trained model and retrain using Polyak's learning rate, assuming $f^* = 0.9*f_{min}$. 

The above method obtains a better constant for the rate provided the initial point is not too far from the true solution, see \eqref{eq: comp} in section \ref{sec: comparison}.  We demonstrate that the method is effective in practice, using small scale and deep learning examples in section \ref{sec : dp}. While we do not prove any results in the non-convex case, the Polyak schedule for SGD can still be applied.  

Polyak's SGD main advantage is requiring estimation of only the parameter $f^*$ which is simpler to approximate than the strong convexity constant necessary for the $\bO(1/k)$ learning rate.  Note that because the variance of the stochastic gradient is estimated as the algorithm runs, the only hyper-parameter is $f^*$.  For scheduled SGD, in a non-convex problem, it is not clear what the strong convexity constant should be (even if it somehow exists locally). However, for Polyak, in a non-convex setting the minimum of the function and the second moment of the gradients remain clear and well defined. In fact, a recent paper achieved good empirical results but without any associated theory \cite{rolinek2018l4}. 

Finally, note that throughout this paper all the rates derived are non-asymptotic, of the form
\begin{equation}
\label{eq: n-asymp}
\E{ ||x_k -x^*||}\leq \frac{C}{k+\gamma},
\end{equation}
for $C, \gamma>0$. Hence, at each iteration, the derived bound holds for $k>0$ and not only when $k\to\infty$.  In order to obtain a non-asymptotic rate for optimally scheduled SGD it is necessary to know the distance between the initial point and the true point, as this value is used in the learning rate schedule. If this value is unknown, we risk selecting an $h_k$ too large or too small, which, in turn, will prevents us from achieving the rate. On the other hand, for Polyak the non-asymptotic rate is achieved regardless as $f^*$ and $x^*$ are intrinsically connected. In \cite{bottou2016optimization}, for example, the rate is asymptotic, of the form 
\begin{equation}
\E{f(x_k)-f^* } \leq \frac{\max\{C, (\gamma + 1)(f(x_0)-f^*)\}}{k+\gamma}
 \end{equation}
 for $C,\gamma>0$.  Here, the rate constant is $C$ and does not depend on the starting point, $x_0$. However, the rate constant $C$ is not necessarily achieved in this formulation, as opposed to the non-asymptotic bound, \eqref{eq: n-asymp}. In section \ref{sec: comparison}, Figure \ref{fig: rate}  displays how our bounds compare to some test runs.

\subsubsection*{Contents}

We start this paper by introducing some mathematical background. Then, in Section~\ref{sec: adg}, which is provided for background and is not needed for the sequel, we prove a convergent rate for gradient descent using Polyak's learning rate. In Section~\ref{sec: asgd}, we recall SGD with optimally scheduled learning rate and its properties. Then we establish a convergence rate of $\bO(1/k)$, although with different constants, for optimally scheduled SGD and SGD with Polyak's learning rate, and compare.   Finally, in Section \ref{sec: numerics}, we present numerical results. We show both results for generated mini-batch noise and for an image recognition problem in deep learning.

\subsection{Notation and Convex Function Inequalities}
In this section, we recall some definitions and establish notation.
Write,
\[
f^* = \min_x f(x), \qquad
x^* \in \argmin_x f(x),
\]
when such quantities are defined. Write $g(x) = \nabla f(x)$ and $g_k = \nabla f(x_k)$.  Write as well $q(x) = \frac 1 2 |x-x^*|^2$ and  $q_k = \frac 1 2 |x_k-x^*|^2$.

The following definitions can be found in \cite{polyak1987introduction} and  \cite[Chapter 5]{beck2017first}. 

\begin{definition}
	The function $f:\R^d \to \R$ is $\mu$-strongly convex if 
	\begin{equation}\label{mu_convexori}\tag{$\mu$-convex}	
	f(x) - (f(y) + \nabla f(y) \cdot(x-y))  \geq \frac{\mu}{2}|x-y|^2,  \qquad \mbox{$x,y\in\R^d$}.
	\end{equation}
	The function  $f:\R^d \to \R$ is $L$-smooth if 
	\begin{equation}\label{L_smooth}\tag{L-smooth}	
   f(x) - (f(y) - \nabla f(y)\cdot (x-y)) \geq \frac{1}{2L} |\nabla f(x) - \nabla f(y)|^2, \qquad \mbox{ $x,y\in\R^d$}.
	\end{equation}
\end{definition}
We will use the following  inequalities in the sequel:
	\begin{align}
\label{thm217}
&f(x) - f^* \geq  \mu q(x),  & \mbox{$x\in\R^d$}\\
\label{mu_convex}
&f(x^*) - (f(x_k) + g_k\cdot (x^* - x_k))  \geq   \frac{\mu}{2} q_k, &\mbox{$x_k\in\R^d$}\\
\label{grad_to_f}
&2L (f(x) - f^*) \geq |\nabla f(x)|^2, &\qquad \mbox{ $x\in\R^d$}\\
\label{q_Lip}
&	q(x) - q(y) = (x-y)\cdot(y-x^*) + \frac 1 2 |x-y|^2, & x,y \in \R^d.
	\end{align}
 \begin{proof}
The first three inequalities follow from~\eqref{mu_convexori}, with choice $y=x^*$ and with choices $x = x^*$ and $y = x_k$, and from \eqref{L_smooth}, with choice $y=x^*$, respectively. 
To establish \eqref{q_Lip} directly, use the algebraic identity $a^2 - b^2 = (a+b)(a-b)$ applied to $q(x)-q(y)$ to obtain
		\begin{align}
			q(x) - q(y) &= \frac 1 2 (x+ y - 2x^*)\cdot(x-y)\\
			&= (x-y)\cdot \left (\frac {x+ y} 2  - x^*\right )\\
			& =(x-y)\cdot (y-x^*) + \frac 1 2 |x-y|^2. \qedhere
		\end{align}
	\end{proof}

\section{Gradient Descent with Polyak's learning rate}
\label{sec: adg}
In this section, we prove a convergence rate for Polyak's learning rate in the full gradient case.  Polyak's learning rate \cite{polyak1987introduction} is most often used in the subgradient case, as discussed above.   This result is  not needed in the sequel, but it is included for context, and because the proof is a simplified version of the proof in the stochastic case. 

Define the Polyak $x$-dependent learning rate by 
\begin{equation}
	\label{ALR}\tag{PLR}
	 h(x) = 2\frac{f(x) - f^*}{\|\nabla f(x)\|^2}.
\end{equation}
 The adaptive gradient descent sequence is given by
\begin{align}\label{AGD}\tag{AGD}
	x_{k+1}  &= x_k - h_k  \nabla f(x_k),
\end{align}
where $0 < h_k  = h(x_k)$. 
Next, we prove convergence of the iterates $x_k$ to $x^*$ in terms of the difference squared,  $q(x) = \frac 1 2 |x-x^*|^2$.
\begin{lemma}
	\label{lem: AGD}
Suppose that $f(x)$ is $\mu$-strongly convex and $L$-smooth. Let $x_k,\ h_k$ be the sequence given by \eqref{AGD} and \eqref{ALR}.  Then,
\begin{equation}\label{time_batch}
\frac 1 L \leq h_k \leq \frac 1 \mu
\end{equation}
and 
\begin{equation}
	q_k \leq (1-\mu/L)^k q_0.
\end{equation}
\end{lemma}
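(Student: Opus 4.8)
The plan is to establish the two-sided bound on $h_k$ first, since it is an immediate consequence of the convexity inequalities already recorded, and then to use the lower bound $h_k \geq 1/L$ together with a one-step contraction estimate for $q_k$. For the bound \eqref{time_batch}: by definition \eqref{ALR}, $h_k = 2(f(x_k)-f^*)/\|\nabla f(x_k)\|^2$. The lower bound $h_k \geq 1/L$ follows directly from \eqref{grad_to_f} applied at $x = x_k$, which gives $\|\nabla f(x_k)\|^2 \leq 2L(f(x_k)-f^*)$. The upper bound $h_k \leq 1/\mu$ requires $\|\nabla f(x_k)\|^2 \geq 2\mu(f(x_k)-f^*)$; this is the standard Polyak-\L{}ojasiewicz-type inequality, which one obtains from strong convexity, e.g. by combining \eqref{thm217} with a gradient estimate, or by minimizing both sides of \eqref{mu_convexori} over $x$ with $y = x_k$ fixed. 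I would insert that short derivation inline.

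For the contraction estimate, the main computation is to expand $q_{k+1}$ using \eqref{q_Lip} with $x = x_{k+1}$ and $y = x_k$, and then substitute the update \eqref{AGD}, namely $x_{k+1} - x_k = -h_k \nabla f(x_k)$. This gives
\begin{align}
q_{k+1} - q_k &= -h_k \nabla f(x_k)\cdot(x_k - x^*) + \tfrac12 h_k^2 \|\nabla f(x_k)\|^2.
\end{align}
Now I would bound the first term from above using \eqref{mu_convex}, which rearranges to $-\nabla f(x_k)\cdot(x_k-x^*) \leq -(f(x_k)-f^*) - \tfrac{\mu}{2} q_k$, and for the quadratic term substitute $h_k \|\nabla f(x_k)\|^2 = 2(f(x_k)-f^*)$ from the definition of \eqref{ALR}, so that $\tfrac12 h_k^2\|\nabla f(x_k)\|^2 = h_k(f(x_k)-f^*)$. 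Combining, the $f(x_k)-f^*$ terms combine as $(h_k - 2h_k)(f(x_k)-f^*) = -h_k(f(x_k)-f^*) \le 0$, leaving
\begin{align}
q_{k+1} &\leq q_k - h_k \tfrac{\mu}{2}\, q_k \cdot 2 - h_k(f(x_k)-f^*) \leq (1 - \mu h_k)\, q_k,
\end{align}
after discarding the nonpositive leftover term (I will be careful about the factor: the $-h_k\frac{\mu}{2}q_k$ from \eqref{mu_convex} is multiplied by $h_k$ once, giving $-\mu h_k q_k$). Finally, since $h_k \geq 1/L$ by \eqref{time_batch}, we get $1 - \mu h_k \leq 1 - \mu/L$, and iterating the one-step bound yields $q_k \leq (1-\mu/L)^k q_0$.

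The only real obstacle is bookkeeping: making sure the two different uses of the Polyak step identity $h_k\|\nabla f(x_k)\|^2 = 2(f(x_k)-f^*)$ and of \eqref{mu_convex} are combined with the right constants so that the $(f(x_k)-f^*)$ contributions cancel to something nonpositive, and that the coefficient multiplying $q_k$ comes out to exactly $1-\mu h_k$ rather than, say, $1 - \mu h_k/2$. I would also note that $1-\mu/L \in [0,1)$ since $\mu \leq L$ (which follows from $\mu$-strong convexity and $L$-smoothness together), so the bound is genuinely contractive and the iterates converge.
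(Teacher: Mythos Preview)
Your approach is exactly the paper's: expand $q_{k+1}-q_k$ via \eqref{q_Lip}, substitute the update, bound the inner-product term by \eqref{mu_convex}, use the Polyak identity $h_k\|g_k\|^2 = 2(f(x_k)-f^*)$ on the quadratic term, and then invoke $h_k\ge 1/L$. Your remark that the upper bound $h_k\le 1/\mu$ really needs the PL inequality $\|\nabla f(x)\|^2\ge 2\mu(f(x)-f^*)$ (obtained by minimizing \eqref{mu_convexori} over $x$), rather than \eqref{thm217} alone, is more careful than the paper's one-line citation.

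One bookkeeping correction, since you flagged it yourself: with \eqref{mu_convex} written in the form actually implied by \eqref{mu_convexori} (recall $q_k=\tfrac12|x_k-x^*|^2$, so the right-hand side is $\mu q_k$, not $\tfrac{\mu}{2}q_k$), the two $(f(x_k)-f^*)$ contributions cancel \emph{exactly}, not to $-h_k(f(x_k)-f^*)$. That is,
\[
q_{k+1}-q_k \;\le\; -h_k\mu q_k \;-\; h_k(f(x_k)-f^*) \;+\; h_k(f(x_k)-f^*) \;=\; -h_k\mu q_k,
\]
which yields $(1-\mu h_k)q_k$ directly, with no leftover term to discard and no ad hoc ``$\cdot\,2$''. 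Your parenthetical about ``multiplied by $h_k$ once, giving $-\mu h_k q_k$'' is the same slip: the missing factor of $2$ comes from $q_k$'s built-in $\tfrac12$, not from an extra power of $h_k$.
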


\begin{proof}[Proof of Lemma~\ref{lem: AGD}]
The inequality \eqref{time_batch} follows from \eqref{grad_to_f} and \eqref{thm217}.
Using the  identity  \eqref{q_Lip} with $x_{k+1}$ and $x_k$ we obtain,
\begin{align}
		q_{k+1} - q_k &= (x_{k+1}-x_k)(x_k-x^*) + \frac 1 2 (x_{k+1}-x_k)^2 && \text{by \eqref{q_Lip}}\\
		&= -h_k g_k(x_k-x^*) + \frac 1 2 h_k^2 g_k^2 && \text{by \eqref{AGD}}\\
		&\leq -h_k \mu q_k -h_k(f(x_k)-f^*) + \frac 1 2 h_k^2 g_k^2 && \text{by \eqref{mu_convex}}\\
		& \leq -h_k \mu q_k && \text{by \eqref{ALR}}\\
		& \leq - \mu q_k /L && \text{by \eqref{time_batch}} 
		\qedhere
\end{align}
\end{proof}

One outcome of the Polyak's learning rate \eqref{ALR} is that the usual restriction $h_k \leq 1/L$ on the learning rate is relaxed to $h_k \leq 1/\mu$.  For example, on a quadratic $f(x) = (\mu x_1^2 + L x_2^2)/2$, whenever $x_2 = 0$, we have $h(x) = 1/\mu$.  More generally, Figure \ref{fig: timestep} illustrates $h(x)$. Clearly from \eqref{ALR}, smaller gradients allow for larger learning rates and vice-versa.
\begin{figure}
	\label{fig: timestep}
	\centerline{
		\includegraphics[width=9cm]{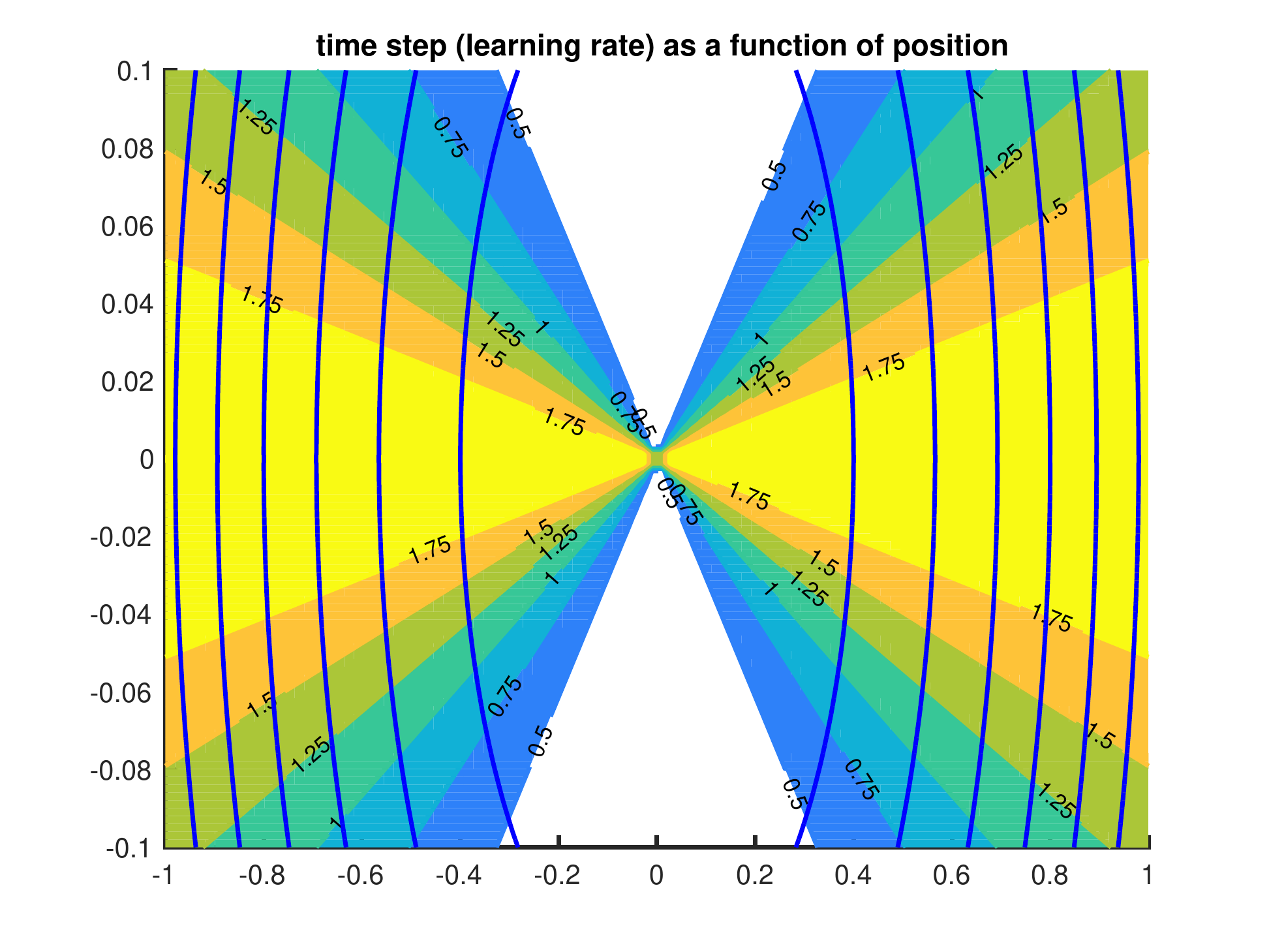}
	}	
	\caption{Level sets of $f$ superimposed on level sets of $h(x) = 2 (f(x)-f^*)/\nabla f(x)^2$. }
\end{figure}

\section{Stochastic Gradient Descent with Polyak's Learning Rate}
\label{sec: asgd}
Now we consider stochastic gradient descent. For the purpose of our analysis, we use an abstract representation, where we write $\nabla f(x)$ for the full gradient and are given an approximation $\nabla_{mb} f(x_k)$ such that
\[
\nabla_{mb} f(x ) = \nabla f(x)+ e,
\]
with $e$ a random error term.

The notation $\nabla_{mb}$ is meant to suggest mini-batch SGD, the important special case where the loss is of the form, $f(x) = \sum f_i(x)$, and a mini-batch approximation is give by $f_{mb}(x) = \sum_{i \in mb} f_i(x)$.   However, in our analysis we consider a general loss function and abstract the mini-batch error into the error term. For clarity, we assume the error is random  with zero mean and finite variance,
\begin{equation}
\label{e_mean_zero}
	\E{e} = 0,\qquad \E{e^2} = \sigma^2.
\end{equation}
More general assumptions on the error are discussed in \cite{bottou2016optimization}.  
If variance reduction \cite{johnson2013accelerating} \cite{zhang2013linear} is incorporated,  it is reflected in the $\sigma^2$ term in~\eqref{e_mean_zero}.

\subsection{Scheduled and  Polyak  SGD}
Recall the SGD step is given by
\begin{align}\label{ASGD}\tag{SGD}
	x_{k+1} 
	= x_k - h_k \nabla_{mb} f(x_k),
\end{align}
where $h_k > 0 $ is the learning rate. 

In order to achieve the optimal convergence rate for scheduled SGD, the schedule makes use of $\mu$ and $q_0 = \|x_0 - x^*\|^2/2$.  The optimal scheduled convergence rate is achieved using the following schedule \begin{equation}
	\tag{SLR}\label{LR}
	h_k = \frac{1}{\mu(k+q_0^{-1} \alpha_S^{-1}) },  
\end{equation}
where $\alpha_S$ is defined below in~\eqref{alphaS}. 
The proof (in a more general setting) can be found in \cite{bottou2016optimization}; we provide a shorter proof below.

For Polyak SGD we define the stochastic Polyak learning rate as a function of $x_k$, $\sigma^2$, and $f^*$. 
\begin{align}
	\label{ASLR}\tag{SPLR}
	 h_k =  h(x_k) =  2\frac{f(x_k) - f^*}{ \E{ \|\nabla_{mb} f(x_k)\|^2}}, 
\end{align}
\begin{remark}
From \eqref{ASLR} we can conclude that increasing the mini-batch size will also increase the learning rate, since increased mini-batch size decreases the variance of the $\nabla_{mb}f(x)$. 
\end{remark}

  A similar learning rate schedule, replacing the expectation in \eqref{ASLR} with 
  $\|\nabla_{mb} f(x_k)\|^2$  was implemented effectively in \cite{rolinek2018l4}, however no convergence proof is presented. 

\begin{remark}
To achieve the optimal constant in the convergence rate for scheduled SGD using \eqref{LR} requires knowing $q_0$ and $\mu$.  The stochastic Polyak learning rate~\eqref{ASLR} requires knowing or estimating $f^*$, as well as the variance of the stochastic gradient, which can be easily approximated.  
To reduce the computational cost, we can evaluate \eqref{ASLR} every fixed number of  number of iterations (or epochs).
\end{remark}

Convergence rates for  SGD using both the optimally scheduled and Polyak learning rates are proven in Theorem~\ref{sgd_thm} and~\ref{asgd_thm} below,  respectively.

\subsection{Basic inequality}
We begin by proving an inequality which we be used for the proof of the convergence rate for both methods. 
\begin{lemma}
	\label{lem: est1}
	Suppose $f$ is $\mu$-strongly convex. Assume that \eqref{e_mean_zero} holds. 
	Let $x_{k+1}$ be   given by \eqref{ASGD} for any $h_k >0$. 	 
	Then, 
	\begin{align}
		\E{q_{k+1}\mid x_k} \leq (1-\mu h_k)q_k -h_k(f(x_k)-f^*) +\frac 1 2h_k^2( g_k^2 + \E{e^2} ).
	\end{align}
\end{lemma}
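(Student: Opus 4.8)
The plan is to follow exactly the three-move structure used in the proof of Lemma~\ref{lem: AGD}, the only new ingredient being that we must take a conditional expectation over the noise term $e$. First I would apply the algebraic identity \eqref{q_Lip} to the pair $x_{k+1}, x_k$ to get
\begin{equation*}
q_{k+1} - q_k = (x_{k+1} - x_k)\cdot(x_k - x^*) + \frac{1}{2} |x_{k+1}-x_k|^2,
\end{equation*}
and then substitute the update rule \eqref{ASGD}, writing $x_{k+1} - x_k = -h_k \nabla_{mb} f(x_k) = -h_k(g_k + e)$, which yields
\begin{equation*}
q_{k+1} - q_k = -h_k (g_k + e)\cdot(x_k - x^*) + \frac{1}{2} h_k^2\, |g_k + e|^2 .
\end{equation*}

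Next I would take the expectation conditional on $x_k$. Given $x_k$, the quantities $g_k$ and $x_k - x^*$ are deterministic, while $e$ has conditional mean zero and conditional second moment $\E{e^2}$ by \eqref{e_mean_zero}; hence the cross term $-h_k \big(\E{e\mid x_k}\big)\cdot(x_k-x^*)$ vanishes, and $\E{|g_k+e|^2 \mid x_k} = g_k^2 + 2\, g_k\cdot\E{e\mid x_k} + \E{e^2} = g_k^2 + \E{e^2}$. This gives
\begin{equation*}
\E{q_{k+1}\mid x_k} - q_k = -h_k\, g_k\cdot(x_k - x^*) + \frac{1}{2} h_k^2\big(g_k^2 + \E{e^2}\big).
\end{equation*}

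Finally, to control the first-order term I would invoke $\mu$-strong convexity in the form \eqref{mu_convex} (used exactly as in the proof of Lemma~\ref{lem: AGD}), which rearranges to $g_k\cdot(x^*-x_k) \le (f^* - f(x_k)) - \mu q_k$; since $h_k>0$, multiplying by $h_k$ and inserting yields
\begin{equation*}
\E{q_{k+1}\mid x_k} \le q_k - \mu h_k q_k - h_k\big(f(x_k)-f^*\big) + \frac{1}{2} h_k^2\big(g_k^2+\E{e^2}\big),
\end{equation*}
which is the claimed bound after grouping the two $q_k$ terms into $(1-\mu h_k)q_k$. There is no real obstacle here: the computation is routine bookkeeping. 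The one point worth stating carefully is that the noise hypothesis \eqref{e_mean_zero} is to be read conditionally on $x_k$ (the standard convention in this setting) — this is precisely what makes the cross term drop and turns $\E{|g_k+e|^2\mid x_k}$ into $g_k^2 + \E{e^2}$, and it is the only place the argument departs from the deterministic Lemma~\ref{lem: AGD}.
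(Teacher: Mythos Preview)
Your proposal is correct and follows essentially the same approach as the paper's own proof. The only cosmetic difference is that you take the conditional expectation before invoking \eqref{mu_convex}, whereas the paper applies \eqref{mu_convex} pointwise first and then takes expectations; since the strong-convexity inequality involves only the deterministic part $-h_k g_k\cdot(x_k-x^*)$, the order is immaterial and both arguments coincide line for line.
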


\begin{proof}

	We start by estimating the difference between consecutive iterates,
	\begin{align}
		q_{k+1} - q_k
		&= (x_{k+1}-x_k)(x_k-x^*) + \frac 1 2 (x_{k+1}-x_k)^2  &&\text{by \eqref{q_Lip}}\\
		&= -h_k (g_k+e)(x_k-x^*) + \frac 1 2 h_k^2 (g_k+e)^2 &&\text{by \eqref{ASGD}}\\
		&\leq -h_k \mu q(x_k) -h_k(f(x_k)-f^*) -h_k e(x_k-x^*) + \frac 1 2 h_k^2 (g_k+e)^2 &&\text{by \eqref{mu_convex}.} 
	\end{align}
	Then, by taking the expectations of the last inequality and conditioning on $x_k$ we obtain,
	\begin{align}
		\E{ q_{k+1}\mid x_k} & \leq  (1- \mu h_k) q_k -h_k(f(x_k)-f^*) 
		-h_k \E{e(x_k-x^*)} \\ & \hspace{150pt}
		+ \frac 1 2 h_k^2 \E{(g_k+e)^2}.
	\end{align}
	Note that, by assumption  \eqref{e_mean_zero},  it follows that
	\[
	\E{e(x_k-x^*)} = 0.
	\]
	Moreover, we may expand the last term as 
	\[
	\E{ (g_k + e)^2 } = \E{g_k^2} + 2 g_k \E{e} + \E{e^2} = g_k^2 + \E{e^2}.
	\]
	Thus,
	\begin{align}
		\E{ q_{k+1}\mid x_k}  & \leq  (1- \mu h_k) q_k -h_k(f(x_k)-f^*) +\frac 1 2 h_k^2 ( g_k^2 + \E{e^2} ),
	\end{align}
	which establishes the inequality.
\end{proof}

\subsection{Scheduled SGD Rate}
Now, we prove the rate for SGD with schedule \eqref{LR}. We use this proof structure to easily  draw a parallel between the different learning rate choices, \eqref{LR} and \eqref{ASLR}.
\begin{theorem}\label{sgd_thm}
	Suppose $f$ is $\mu$-strongly convex and $L$-smooth.  Assume \eqref{e_mean_zero}.
	Let $x_k,\ h_k$ be the sequence given by~\eqref{ASGD} with the optimal learning rate schedule~\eqref{LR}. Set $M= \max_k \|g_k\|$.
	Then,
	\begin{equation}
	\E{q_k \mid x_{k-1}} \leq \frac{1}{\alpha_S k +q_0^{-1}}, \mbox{ for all $k\geq 0$,}
	\end{equation}
	where
	\begin{equation}\label{alphaS}
	\alpha_S = \frac{2 \mu^2}{\sigma^2+M^2}.
	\end{equation}
\end{theorem}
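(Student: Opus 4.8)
The plan is to turn the one-step estimate of Lemma~\ref{lem: est1} into a closed scalar recursion for $a_k := \E{q_k}$ and then run a Gr\"onwall-type induction matched to the schedule \eqref{LR}. The first move is to \emph{not} discard the favorable term $-h_k(f(x_k)-f^*)$ appearing in Lemma~\ref{lem: est1}, but instead lower-bound it by strong convexity via \eqref{thm217}, $f(x_k)-f^*\geq \mu q_k$. Combining this with $g_k^2\leq M^2$ and $\E{e^2}=\sigma^2$ gives
\[
\E{q_{k+1}\mid x_k}\ \leq\ (1-2\mu h_k)\,q_k+\tfrac12 h_k^2\bigl(M^2+\sigma^2\bigr).
\]
The definition \eqref{alphaS} is calibrated precisely so that $\tfrac12(M^2+\sigma^2)=\mu^2/\alpha_S$; taking total expectations (tower property) then yields the scalar inequality $a_{k+1}\leq (1-2\mu h_k)a_k+\mu^2 h_k^2/\alpha_S$.

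For the induction, write $\beta:=q_0^{-1}\alpha_S^{-1}$, so that \eqref{LR} reads $h_k=1/(\mu(k+\beta))$ and the claimed bound is exactly $a_k\leq 1/(\alpha_S(k+\beta))$. The base case $k=0$ is the identity $a_0=q_0=1/(\alpha_S\beta)$. For the inductive step, substituting $\mu h_k=1/(k+\beta)$ and the inductive hypothesis into the scalar recursion gives
\[
a_{k+1}\ \leq\ \Bigl(1-\tfrac{2}{k+\beta}\Bigr)\frac{1}{\alpha_S(k+\beta)}+\frac{1}{\alpha_S(k+\beta)^2}\ =\ \frac{1}{\alpha_S}\cdot\frac{(k+\beta)-1}{(k+\beta)^2}.
\]
The elementary inequality $(n-1)(n+1)\leq n^2$ with $n=k+\beta$ upgrades this to $a_{k+1}\leq 1/(\alpha_S(k+1+\beta))$, which is the bound at index $k+1$, closing the induction. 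Rewriting $\alpha_S(k+\beta)=\alpha_S k+q_0^{-1}$ gives the stated estimate, and the conditional form $\E{q_k\mid x_{k-1}}$ is read off directly from Lemma~\ref{lem: est1} at the final step (with $q_0$ deterministic handling $k=0$).

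There is no deep obstacle here; the whole content is in two small but essential choices. First, one must retain the $f(x_k)-f^*$ term and spend it through \eqref{thm217}: without the extra $-\mu h_k q_k$ it supplies, the naive induction merely reproduces the bound at index $k$ and cannot advance the denominator. Second, the schedule \eqref{LR} and the constant \eqref{alphaS} have to be reverse-engineered so that the recursion telescopes exactly into the $\frac{n-1}{n^2}\leq\frac1{n+1}$ step; the only thing requiring care is checking that this particular $h_k$ (and the matching base case $a_0=1/(\alpha_S\beta)$) makes that step go through.
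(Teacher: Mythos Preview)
Your proof is correct and follows essentially the same approach as the paper: apply Lemma~\ref{lem: est1}, absorb the $-h_k(f(x_k)-f^*)$ term via \eqref{thm217} to get the factor $(1-2\mu h_k)$, substitute the schedule \eqref{LR}, and close the induction using $(n-1)/n^2\leq 1/(n+1)$ with $n=k+q_0^{-1}\alpha_S^{-1}$. Your $\beta$ is exactly the paper's shift $\hat k-k$, and the arithmetic is line-for-line the same.
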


\begin{proof}

	We prove the rate by induction. Clearly, it holds for $k=0$. For the induction step, assume it the rate holds and define $\hat{k} = k+q_0^{-1}\alpha_S^{-1}$. Then, from Lemma \ref{lem: est1}, 
	\begin{align}
		\E{q_{k+1} } 
		& \leq(1-2 \mu h_k) q_k + \frac 1 2 h_k^2 (g_k^2 + \E{e^2}) &\mbox{  by \eqref{thm217}}
		\\
		& \leq \left(1-\frac 2 {\hat{k}} \right) q_k + \frac{1}{ 2\mu^2 \hat{k}^2}(M^2 +\sigma^2) & \mbox{by \eqref{LR} and assumption}\\
		& \leq \left(1-\frac 2 {\hat{k}} \right) \frac{1}{\alpha_{S} \hat{k}}+ \frac{1}{ \alpha_{S} \hat{k}^2} & \mbox{by the induction hypothesis} \\
		&   = \left(\frac{\hat{k}-1} {\hat{k}^2} \right) \frac{1}{\alpha_{S}} \leq \frac{1}{\alpha_S} \frac{1}{\hat{k}+1} = \frac{1}{\alpha_S (k+1) + q_0^{-1}},
	\end{align}
	which proves the rate.
	\end{proof}

\subsection{Polyak SGD Rate} 

Now we prove the rate for Polyak SGD, \eqref{ASGD} using \eqref{ASLR}.
\begin{theorem}\label{asgd_thm}
Suppose $f$ is $\mu$-strongly convex and $L$-smooth.  Assume the mean and variance of $e$ are given by ~\eqref{e_mean_zero}.  Let $x_k$, $h_k$ be the sequence given by~\eqref{ASGD} and \eqref{ASLR}.  
		Then,
		\begin{align}
			\E{q_k \mid x_{k-1}} \leq \frac{1}{\alpha_P k + q_0^{-1}}, \qquad \text{ for all  $k \geq 0$ ,}
		\end{align}
		where 
		\[
		\alpha_P=\frac{2 \mu^2}{\sigma^2+ 2\mu^2(L-\mu) q_0}.
		\]		
\end{theorem}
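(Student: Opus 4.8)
The plan is to run the argument of Theorem~\ref{sgd_thm} almost verbatim, with the deterministic schedule \eqref{LR} replaced by the random step $h_k=h(x_k)$ of \eqref{ASLR}, everything being funnelled through the basic inequality of Lemma~\ref{lem: est1}.

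\emph{Step 1: the one-step estimate.} Since $\nabla_{mb}f(x_k)=g_k+e$ with $\E{e}=0$ and $\E{e^2}=\sigma^2$, the conditional second moment is $\E{\|\nabla_{mb}f(x_k)\|^2\mid x_k}=g_k^2+\sigma^2$, so \eqref{ASLR} says $h_k(g_k^2+\sigma^2)=2(f(x_k)-f^*)$. Substituting this into the conclusion of Lemma~\ref{lem: est1},
\[
\E{q_{k+1}\mid x_k}\leq (1-\mu h_k)q_k-h_k(f(x_k)-f^*)+\tfrac12 h_k^2(g_k^2+\sigma^2),
\]
the last two terms cancel exactly, because $\tfrac12 h_k^2(g_k^2+\sigma^2)=\tfrac12 h_k\cdot 2(f(x_k)-f^*)=h_k(f(x_k)-f^*)$; hence
\[
\E{q_{k+1}\mid x_k}\leq (1-\mu h_k)q_k .
\]
Since $\mu h_k>0$, this also gives $\E{q_{k+1}\mid x_k}\leq q_k$, so $k\mapsto\E{q_k}$ is non-increasing and $\E{q_k}\leq q_0$ for every $k$. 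This a priori bound is the reason $q_0$ appears in $\alpha_P$ (for scheduled SGD, $q_0$ is instead hard-wired into \eqref{LR}).

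\emph{Step 2: reducing to a scalar recursion, then inducting.} Because $h_k=h(x_k)$ is random and correlated with $q_k$, it cannot be pulled out of the expectation: one must bound $\mu h_k$ below by something that sees $x_k$ only through $q_k$. Use strong convexity, $f(x_k)-f^*\geq\mu q_k$, in the numerator of $h_k$ (this is \eqref{thm217}) and smoothness, $g_k^2\leq 2L(f(x_k)-f^*)$, in the denominator (this is \eqref{grad_to_f}); bounding the remaining occurrence of $f(x_k)-f^*$ in the denominator by the envelope $Lq_k$ (equivalently, using the $(L-\mu)$-smoothness of $f-\tfrac{\mu}{2}|\cdot-x^*|^2$, whose gradient vanishes at $x^*$, to sharpen the constant), together with monotonicity of $t\mapsto t/(2Lt+\sigma^2)$, one gets a pointwise estimate
\[
\E{q_{k+1}\mid x_k}\leq q_k-\frac{2\mu^2 q_k^2}{\sigma^2+c\,q_k}
\]
for an explicit constant $c$ depending only on $\mu$ and $L$. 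Taking full expectations, using Jensen's inequality (the map $t\mapsto t-2\mu^2t^2/(\sigma^2+ct)$ is concave) and then $\E{q_k}\leq q_0$ to replace $q_k$ in the denominator, this collapses to $\E{q_{k+1}}\leq\E{q_k}-\alpha_P\,\E{q_k}^2$ with $\alpha_P$ as in the statement. One then closes along the same lines as Theorem~\ref{sgd_thm}: with $a_k=\E{q_k}$ and $\hat k=k+q_0^{-1}\alpha_P^{-1}$, the hypothesis $a_k\leq 1/(\alpha_P\hat k)$ and $a_{k+1}\leq a_k(1-\alpha_P a_k)$ give, using that $x\mapsto x(1-\alpha_P x)$ is increasing on the relevant range,
\[
a_{k+1}\leq\frac{1}{\alpha_P\hat k}\Bigl(1-\frac{1}{\hat k}\Bigr)=\frac{\hat k-1}{\alpha_P\hat k^{2}}\leq\frac{1}{\alpha_P(\hat k+1)}=\frac{1}{\alpha_P(k+1)+q_0^{-1}} .
\]

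\emph{Where the work is.} The genuinely new point compared with Theorem~\ref{sgd_thm} is converting the clean bound $(1-\mu h_k)q_k$ — in which $h_k$ still carries the randomness of $\nabla f(x_k)$ and $f(x_k)-f^*$ — into an estimate depending on $x_k$ only through $q_k$, after which the a priori bound $\E{q_k}\leq q_0$ and Jensen's inequality reduce everything to the same scalar recursion as in the scheduled case. I expect the delicate part to be the bookkeeping in Step~2: combining \eqref{thm217}, \eqref{grad_to_f} and the smoothness envelope of $f(x_k)-f^*$ so as to land on exactly the stated constant $\alpha_P=2\mu^2/(\sigma^2+2\mu^2(L-\mu)q_0)$ rather than a cruder one, and checking the mild condition (of the type $q_0\leq 1/(2\alpha_P)$) that keeps the scalar map monotone over the range used in the induction.
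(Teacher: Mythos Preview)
Your Step 1 is exactly the paper's: applying Lemma~\ref{lem: est1} with $h_k$ from \eqref{ASLR} makes the last two terms cancel, and then bounding $h_k$ below via \eqref{grad_to_f} and \eqref{thm217} gives the pointwise nonlinear recursion
\[
\E{q_{k+1}\mid x_k}\ \leq\ T(q_k),\qquad T(x)=\frac{\beta+rx}{\beta+x}\,x,\quad r=1-\frac{\mu}{L},\ \ \beta=\frac{\sigma^2}{2\mu L},
\]
which is Lemma~\ref{lem: est2} in the paper. Equivalently $T(x)=x-2\mu^2x^2/(\sigma^2+2\mu L x)$, so your ``explicit constant $c$'' is $2\mu L$; the $(L-\mu)$-smoothness aside does not improve this one-step estimate.

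The divergence is in how the rate constant is extracted. You linearize the recursion by replacing the denominator $\sigma^2+2\mu L\, q_k$ with $\sigma^2+2\mu L\, q_0$ and then run the scheduled-SGD induction on $a_{k+1}\leq a_k-\alpha a_k^2$. That is correct but yields only $\alpha=2\mu^2/(\sigma^2+2\mu L q_0)$, strictly worse than the target. The paper does \emph{not} linearize: it keeps $T$ intact and proves a tailored elementary lemma (Lemma~\ref{lem: transf}) showing that with $b=q_0^{-1}$ and $c=(1-r)/(\beta+rq_0)$ one has $T\bigl(1/(ck+b)\bigr)\leq 1/(c(k+1)+b)$ for all $k\geq0$. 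Substituting $r,\beta$ gives $c=2\mu^2/(\sigma^2+2\mu(L-\mu)q_0)$. The extra $2\mu^2 q_0$ you lose in the denominator comes precisely from freezing $q_k$ at $q_0$, not from any slack in the one-step bound, so no sharpening of the first inequality will recover it. (Incidentally, the paper's own computation via Lemma~\ref{lem: transf} produces $2\mu(L-\mu)q_0$ in the denominator, not the $2\mu^2(L-\mu)q_0$ displayed in the statement.)

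One genuine advantage of your route: the paper's induction tacitly treats $q_k$ as deterministic when it invokes monotonicity of $T$. Your concavity-of-$T$/Jensen step is the clean fix, giving $\E{q_{k+1}}\leq T(\E{q_k})$ for the unconditional expectations. If you keep that step but then feed the resulting scalar recursion into Lemma~\ref{lem: transf} instead of linearizing, you obtain both the sharper constant and a rigorous treatment of expectations; the monotonicity side condition you flag also disappears, since $T$ is increasing on all of $[0,\infty)$.
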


The proof of the theorem requires two auxiliary Lemmas. Lemma \ref{lem: est2} provides an inequality for $q_{k+1}$ in terms on the previous iterate, $q_k$.  Lemma \ref{lem: transf} provides a further bound. We combine both in an induction proof at the end of the section.

\begin{lemma}
	\label{lem: est2}
	Suppose $f$ is $\mu$-strongly convex and $L$-smooth.  
	Let $x_k,\ h_k$ be the sequence given by \eqref{ASGD} and \eqref{ASLR}.  
	Assume that \eqref{e_mean_zero} holds. 
	Then, 
	\begin{align}\label{main_est}
		\E{q_{k+1}\mid x_k} \leq (1-\mu h_k)q_k \leq \frac{ \beta +r q_k }{ \beta+q_k }q_k ,
	\end{align}
	where  $r = 1 - \frac \mu L$ and $\beta = \frac{\sigma^2}{2\mu L}$.
\end{lemma}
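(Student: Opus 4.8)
The plan is to get the first inequality directly from Lemma~\ref{lem: est1} by exploiting the exact form of \eqref{ASLR}, and then to obtain the second inequality by an elementary monotonicity argument for a rational function of $f(x_k)-f^*$. First I would start from the bound of Lemma~\ref{lem: est1},
\[
\E{q_{k+1}\mid x_k} \leq (1-\mu h_k)q_k - h_k(f(x_k)-f^*) + \tfrac12 h_k^2\big(g_k^2 + \E{e^2}\big),
\]
and observe that, by \eqref{e_mean_zero}, $\E{\|\nabla_{mb} f(x_k)\|^2} = g_k^2 + \sigma^2$, so that \eqref{ASLR} reads $h_k = 2(f(x_k)-f^*)/(g_k^2+\sigma^2)$. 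Substituting this once into the quadratic term gives $\tfrac12 h_k^2(g_k^2+\sigma^2) = h_k (f(x_k)-f^*)$, the last two terms cancel, and the first inequality $\E{q_{k+1}\mid x_k}\leq(1-\mu h_k)q_k$ follows. This is the stochastic counterpart of the step ``by \eqref{ALR}'' in the proof of Lemma~\ref{lem: AGD}.

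For the second inequality I would write $w = f(x_k)-f^*\ge 0$, so that $1-\mu h_k = 1 - \tfrac{2\mu w}{g_k^2+\sigma^2}$. Since $t\mapsto 1-\tfrac{2\mu w}{t}$ is nondecreasing in $t>0$ and, by \eqref{grad_to_f}, $g_k^2 \le 2Lw$, I can replace $g_k^2+\sigma^2$ by the larger quantity $2Lw+\sigma^2$ to get $1-\mu h_k \le \tfrac{2(L-\mu)w+\sigma^2}{2Lw+\sigma^2}$. Dividing numerator and denominator by $2L$ and using $r = 1-\mu/L$ together with $\sigma^2/(2L)=\mu\beta$, the right-hand side becomes $\tfrac{rw+\mu\beta}{w+\mu\beta}$. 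The map $\phi(t)=\tfrac{rt+\mu\beta}{t+\mu\beta}$ is nonincreasing on $t\ge 0$ because $r=1-\mu/L\le 1$ and $\mu\beta\ge 0$, and by \eqref{thm217} we have $w\ge \mu q_k$; hence $\phi(w)\le \phi(\mu q_k)=\tfrac{r\mu q_k+\mu\beta}{\mu q_k+\mu\beta}=\tfrac{rq_k+\beta}{q_k+\beta}$. Multiplying through by $q_k\ge 0$ and combining with the first inequality yields $\E{q_{k+1}\mid x_k}\le(1-\mu h_k)q_k\le \tfrac{\beta+rq_k}{\beta+q_k}q_k$.

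The computations are all routine; the only delicate point is keeping the inequality directions consistent — namely that enlarging the denominator $g_k^2+\sigma^2$ via \eqref{grad_to_f} enlarges $1-\mu h_k$, and that the resulting rational function $\phi$ is decreasing, so that the bound $w\ge\mu q_k$ from \eqref{thm217} can be fed in at the end. I do not expect a genuine obstacle here; one may additionally record that $h_k\le 1/\mu$ (so $1-\mu h_k\ge 0$), which follows from the same two ingredients, although it is not strictly needed since the target fraction is already nonnegative.
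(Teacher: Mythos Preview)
Your proposal is correct and follows essentially the same route as the paper: the first inequality comes from Lemma~\ref{lem: est1} via the cancellation built into \eqref{ASLR}, and the second inequality is obtained by bounding $g_k^2\le 2L(f(x_k)-f^*)$ using \eqref{grad_to_f} and then $f(x_k)-f^*\ge \mu q_k$ using \eqref{thm217}. The only cosmetic difference is that the paper phrases the second step as a lower bound on $h_k$ and then simplifies, whereas you phrase it as monotonicity of $t\mapsto 1-\tfrac{2\mu w}{t}$ followed by monotonicity of $\phi$; the underlying inequalities are identical.
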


\begin{proof}
Apply Lemma \ref{lem: est1} with $h_k$ given by \eqref{ASLR} to obtain	\begin{equation}
	\E{ q_{k+1}\mid x_k}  \leq (1-\mu h_k)q_k.
	\end{equation}
We have the following estimate on the learning rate,
\begin{align}
	h_k &= 2 \frac{f_k - f^{\star}}{|g_k|^2 + \sigma^2}  & \mbox{by \eqref{ASLR}} \\
	& \geq 2 \frac{f_k - f^{\star}}{2 L (f_k - f^*) + \sigma^2} & \mbox{by  \eqref{grad_to_f}} \\
	& = \frac{2}{2 L + \frac{\sigma^2}{f_k -f^*}} & \mbox{ divide by $f_k - f^*$} \\
	& \geq \frac{2}{2 L + \frac{\sigma^2}{\mu q_k}} & \mbox{ by \eqref{thm217}}.\\
\end{align}
This establishes the second inequality in \eqref{main_est} as follows,
\begin{align}
	(1 - \mu h_k) q_k &\leq  \left( 1 - \frac{2\mu}{2 L + \frac{\sigma^2}{\mu q_k}}  \right) q_k \leq  \frac{2L q_k +\frac {\sigma^2} \mu - 2\mu q_k}{2 L + \frac{\sigma^2}{\mu q_k}} & \\
	& =  \frac{(1 - \frac \mu L) q_k +\frac{ \sigma^2}{2 \mu L}}{q_k+ \frac{\sigma^2}{2\mu L}}q_k &\\
	&  = \frac{r q_k + \beta}{q_k + \beta}q_k, 
\end{align}
where $r = 1 - \frac \mu L$ and $\beta = \frac{\sigma^2}{2\mu L}$.
\end{proof}

Next, we establish the following technical, but elementary Lemma, which is used to prove the rate. 
\begin{lemma}
	\label{lem: transf}
Define the transformation $T$,
\[
 T(x) =  \frac{\beta + rx}{\beta + x}x,
 \] 
for $\beta>0$ and for $r \in [0,1)$.  
Given $b > 0$ define
\[
c =\frac{1-r}{\beta+r/b}.
\]

Then, for  all $k\geq 0$, 
\[
 T\left(\frac{1}{c k + b}\right) \leq \frac{1}{c (k+1) +b}.
 \]

\end{lemma}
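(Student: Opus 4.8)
The plan is to verify the inequality $T\!\left(\frac{1}{ck+b}\right) \le \frac{1}{c(k+1)+b}$ by direct algebraic manipulation, treating $k\ge 0$ as a continuous parameter and reducing everything to a polynomial inequality. First I would abbreviate $x = \frac{1}{ck+b}$ so that $ck+b = 1/x$, and compute $T(x) = \frac{\beta + rx}{\beta+x}x$. The goal inequality becomes $\frac{(\beta+rx)x}{\beta+x} \le \frac{1}{c(k+1)+b} = \frac{1}{(1/x)+c} = \frac{x}{1+cx}$. Since $x>0$ we may cancel one factor of $x$ and (because $\beta+x>0$ and $1+cx>0$) clear denominators, so the claim is equivalent to
\begin{equation}
(\beta + rx)(1+cx) \le \beta + x.
\end{equation}
Expanding the left side gives $\beta + \beta c x + r x + r c x^2$, so after subtracting $\beta$ the inequality we must prove is $\beta c x + r x + r c x^2 \le x$, i.e.
\begin{equation}
\beta c + r c x \le 1 - r, \qquad\text{equivalently}\qquad c(\beta + r x) \le 1 - r.
\end{equation}

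Next I would use the definition $c = \frac{1-r}{\beta + r/b}$, which rearranges to $c(\beta + r/b) = 1-r$. Thus the target inequality $c(\beta + rx) \le 1-r = c(\beta + r/b)$ is, after dividing by $c>0$ and subtracting $\beta$, simply the statement $r x \le r/b$. Since $r\in[0,1)$ is nonnegative, this holds as soon as $x \le 1/b$; and indeed $x = \frac{1}{ck+b} \le \frac{1}{b}$ for all $k\ge 0$ because $c>0$ (note $c>0$ follows from $r<1$ and $\beta, b>0$) makes the denominator at least $b$. That closes the argument. I would also note the degenerate case $r=0$ separately only if needed — there the inequality $rx \le r/b$ is the trivial $0\le 0$ — but the chain above already covers it.

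The only mild subtlety, and the place I would be most careful, is the direction of the inequalities when clearing denominators: every quantity being multiplied through ($\beta+x$, $1+cx$, and $c$ itself) must be positive, which requires recording that $\beta>0$, $x>0$, and $c>0$ at the outset. The positivity of $c$ is the one fact that is not completely immediate from the hypotheses as literally stated ($\beta>0$, $r\in[0,1)$, $b>0$ give $\beta + r/b>0$ and $1-r>0$, hence $c>0$), so I would state it explicitly before the manipulations. Beyond that, the proof is a two-line reduction to $rx\le r/b$, and there is no real obstacle.
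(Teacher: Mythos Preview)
Your proof is correct and is essentially the same computation as the paper's: after clearing denominators you reduce to $c(\beta+rx)\le 1-r$, which is precisely the paper's numerator condition $a_k(1-r-c\beta)-cr\ge 0$ rewritten with $a_k=1/x$. The paper packages the argument via the reciprocal map $S(y)=1/T(1/y)$ and checks positivity of a linear-in-$k$ expression, whereas you work directly with $x$ and the bound $x\le 1/b$; the content is identical and your presentation is slightly more streamlined.
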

\begin{proof}
Consider $x = \frac{1}{y}.$
Then, define $S$ as the multiplicative inverse of $T$: 
\begin{align}
	\label{eq: S}
	T(x) = T\left(\frac 1 y\right) = \frac 1 y \frac{\beta y + r}{\beta y +1} := \frac{1}{S(y)}.
\end{align}

	Next, we consider the sequence $a_k := c k + b$. We start by expanding the difference:
	\begin{align}
		S(a_k) - a_{k+1} &= a_k\frac{\beta a_k + 1}{\beta a_k + r} - (a_k + c) \\
		&= \frac{a_k(1 - r - c\beta) - c r}{\beta a_k + r} \\
		& =  \frac{k c(1 - r - c\beta) + (b(1-r-c\beta)- c r)}{\beta ck + (\beta b+ r)} .
	\end{align}
	
	This difference is positive if both the denominator and numerator are positive for $k\geq 0$. The denominator's positivity follows from $b,c,r,\beta \geq 0$. The numerator is positive for all $k\geq 0$ provided
	\begin{align}
		1 - r - c\beta \geq 0,
	\end{align}
	since the choice of $c$ guarantees $b(1-r-c\beta)- c r =0.$
	In fact, positivity of this expression also holds for our choice of $c$. 
	
	Hence, $S(a_k) - a_{k+1 } \geq 0$. Then, using \eqref{eq: S},
	\[
T\left(\frac 1 {a_k}\right)  =  \frac{1}{S(a_k)}\leq \frac 1 {a_{k+1}} = \frac 1 {c(k+1) +b},
	\]
and	the lemma follows.
\end{proof}

\begin{proof}[Proof of Theorem \ref{asgd_thm}]
We proceed by induction. 

The base case corresponds to $q_0 \leq  \frac{1}{\alpha_P k + q_0^{-1}}$ for $k=0$, which clearly holds.  

For the induction step, assume that $q_k \leq \frac{1}{\alpha_P k + q_0^{-1}}$.
We will apply  Lemma \ref{lem: transf} with  $b=q_0^{-1}$ and the corresponding $c$. 

\begin{align}
	\E{q_{k+1}|x_k} & \leq T(q_k) & \mbox{ by Lemma \ref{lem: est2} }\\
	& \leq T\left(\frac 1 {\alpha_P k + q_0^{-1}}\right) & \mbox{by monotonicity of $T$ and  hypothesis} \\
	& \leq \frac 1 {\alpha_P(k+1) +q_0^{-1}} & \mbox{ by Lemma \ref{lem: transf},  } 
\end{align}
which concludes the induction.
\end{proof}


\subsection{Comparison of the Polyak and Scheduled Rate Constants} \label{sec: comparison}

The constant in the rate established in Theorem~\ref{asgd_thm} for Polyak SGD
can be smaller than the rate for scheduled SGD in Theorem~\ref{sgd_thm}.
Recall the rates are given by,

\[
\alpha_{S} = \frac{2 \mu^2}{\sigma^2+M^2},\qquad
\alpha_{P} =\frac{2 \mu^2}{\sigma^2+ 2\mu^2(L-\mu) q_0}.
\]
Now, we note that $\alpha_{P} \geq \alpha_{S}$ if

\begin{equation}
\label{eq: comp}
q_0 \leq \frac{M^2}{2\mu^2(L-\mu)}.
\end{equation}
Since we can reset the algorithm, this condition will eventually hold, using the reset value for $q_0$. In Figure \ref{fig: rate} we can see the gap between a run of SGD with both schedules and the non-asymptotic bounds derived. Note that the bound could be recomputed at each iteration, yielding tighter ones.
\begin{figure}
	\centerline{
		\includegraphics[width=10cm]{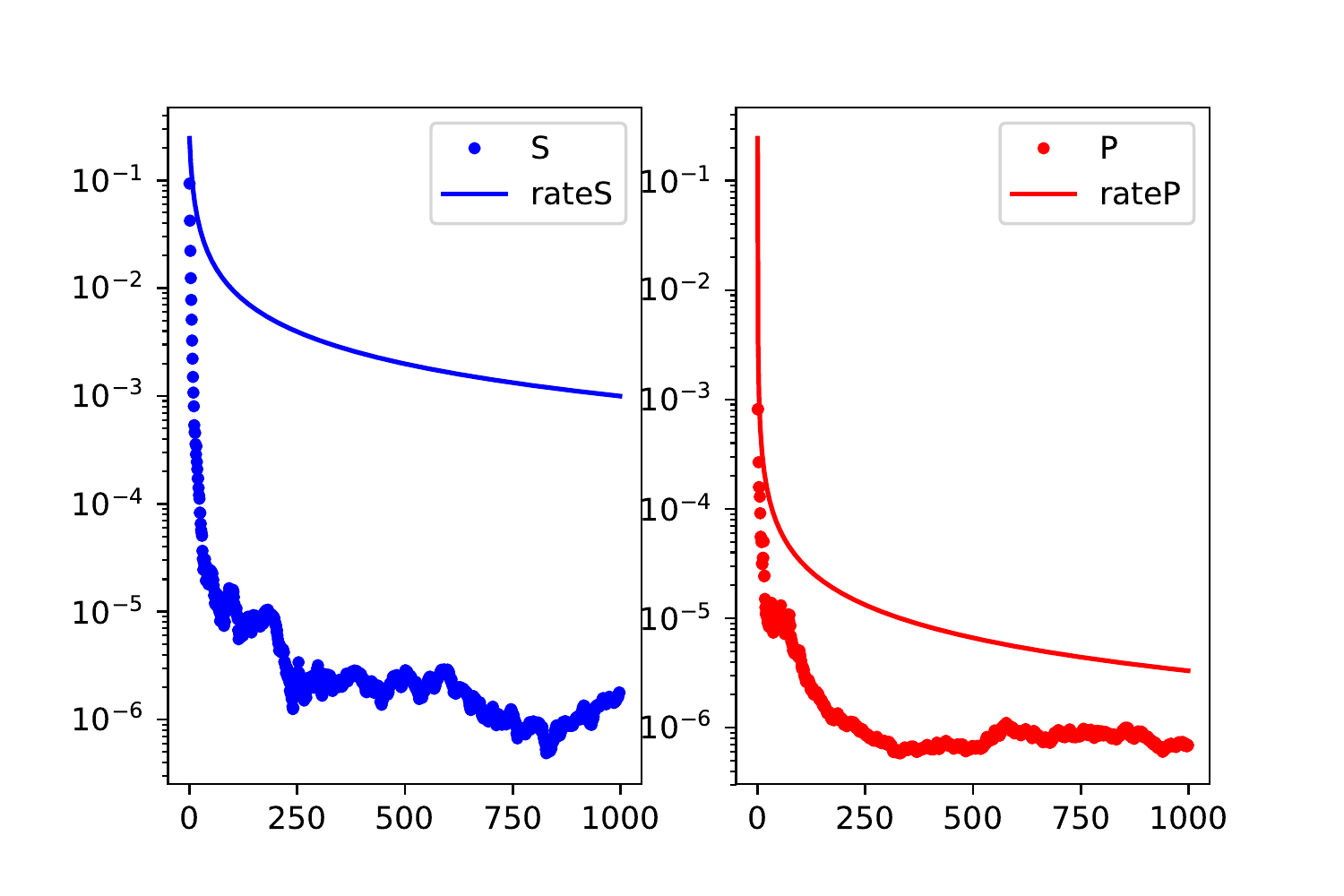}	
	}
	\caption{Average of 5 test runs of scheduled SGD with bound (blue) and Polyak SGD with bound (red) on a quadratic example. The batch size selected was 100 over 1000 sample points.}
	\label{fig: rate}
\end{figure}

\section{Numerical Results}
\label{sec: numerics}
In this section, we implement the Polyak adaptive SGD algorithm and compare it to scheduled SGD.  The Polyak SGD code is implemented in PyTorch\footnote{\hyperlink{https://github.com/marianapraz/polyakSGD}{https://github.com/marianapraz/polyakSGD}}.
For practical implementations, we capped the learning rates with a minimum and maximum value to protect against errors in the estimation of $f^*$.  

The first problem we considered was
\begin{equation}
	f(x) = \frac 1 {2N} \sum_{i=1}^N \|x - x_i\|^2,
\end{equation}
for a given data set $x_1,\dots, x_N \in \R^2$.  The stochastic gradients were obtained by taking a different random mini-batch of fixed size $M$ at each iteration.  A mini-batch of size $M$ is a random subset $I \subset [1,\dots, N]$ with $|I| = M$.  Then, 
\begin{equation}
	f_{I} = \frac 1 {2|I|} \sum_{i\in I} \|x - x_i\|^2
\end{equation}
and the stochastic gradient is the gradient of $f_{I}$.

In Figure~\ref{fig: mb1}, we illustrate the learning rate as a function of the mini-batch size and the location of the current iterate. As expected, see \eqref{ASLR}, the learning rate increases when the mini-batch noise is smaller, which happens when we have a larger mini-batch. 

\begin{figure}
	\centerline{
		\includegraphics[width=6cm]{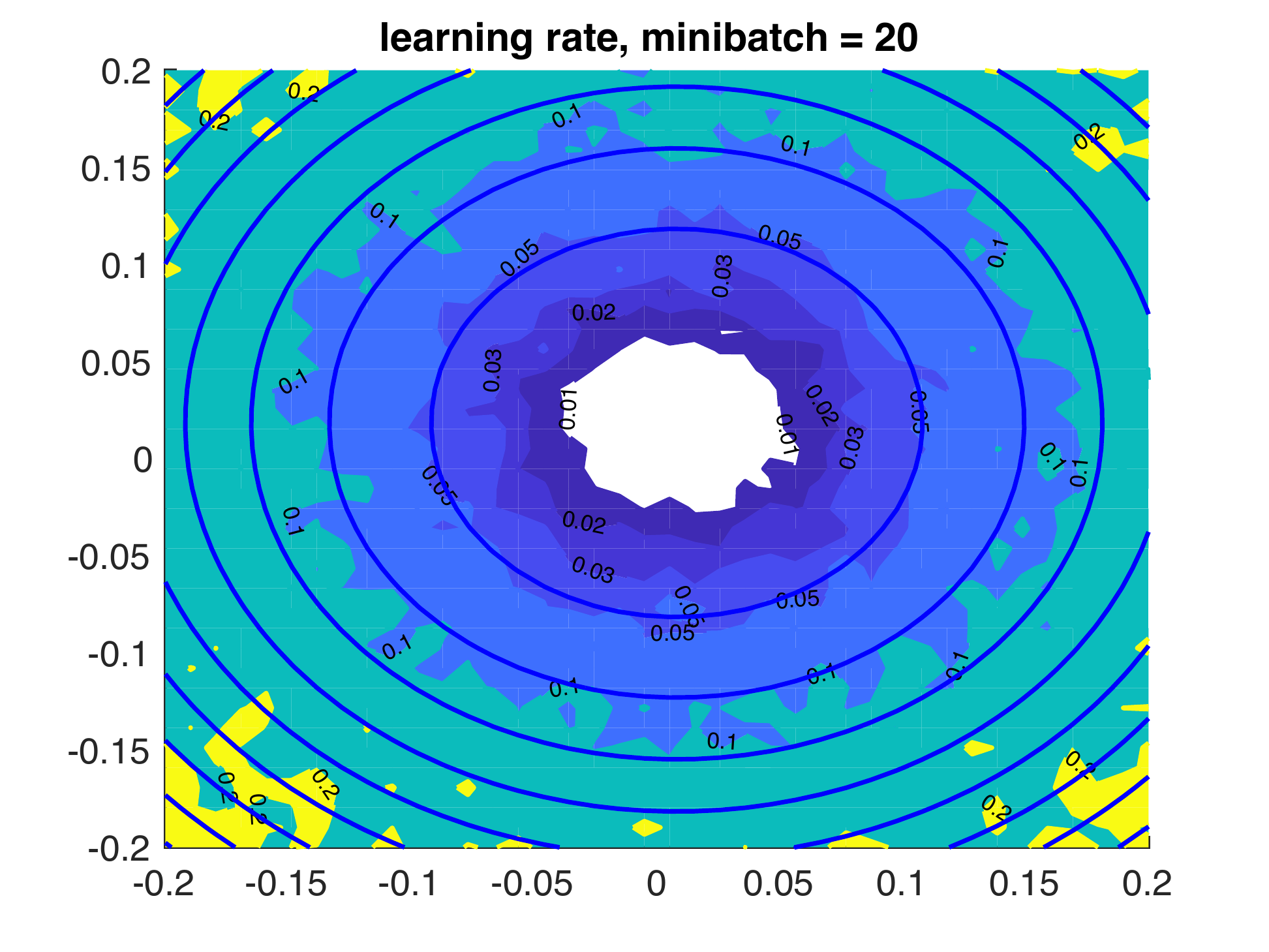}\includegraphics[width=6cm]{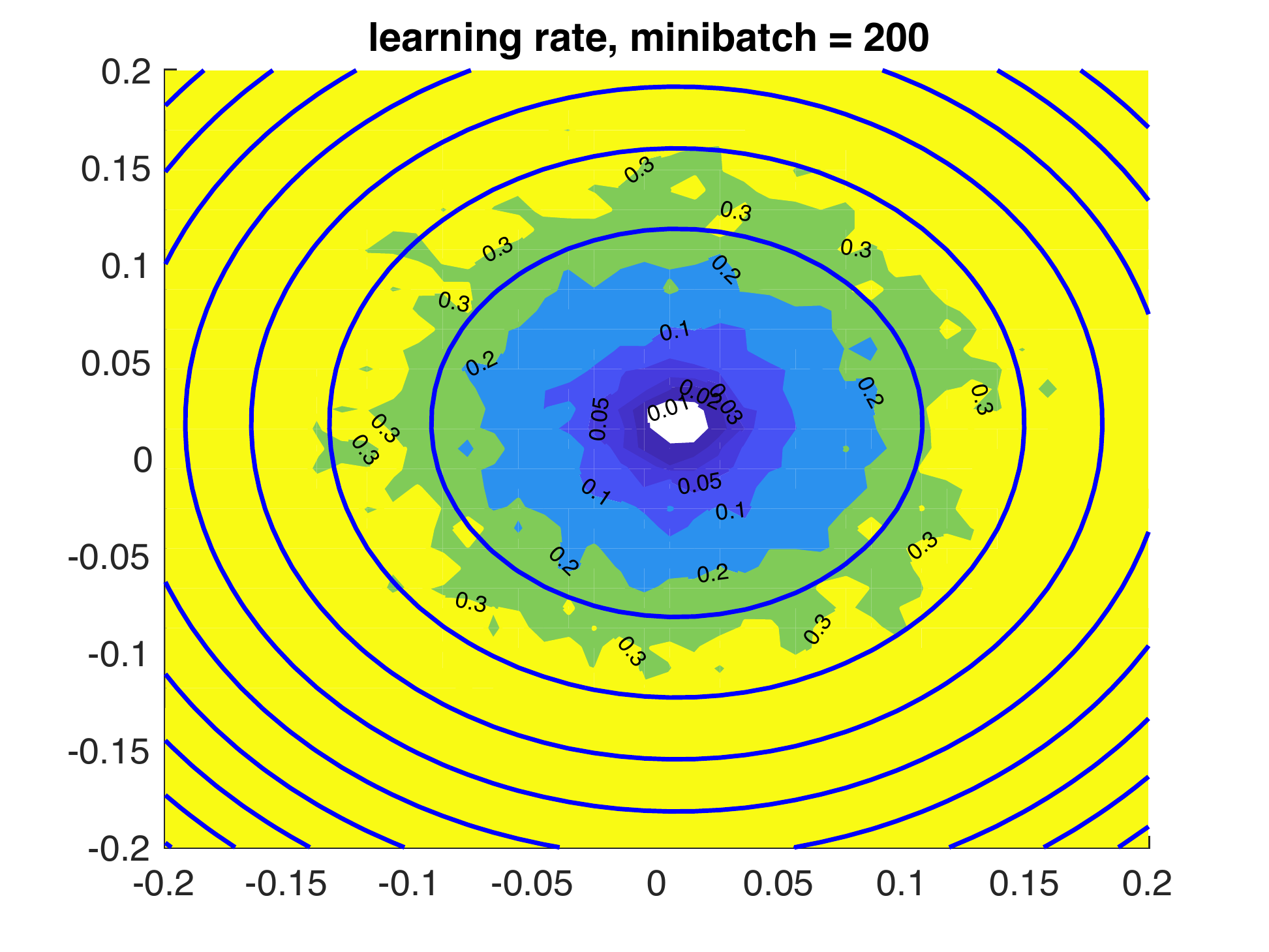}	
	}
	\caption{Level sets of $f(x,y) = x^2 +y^2 $ superimposed on level sets of formula \eqref{ASLR}. Left: mini-batch size 20. Right: mini batch size 200.}
	\label{fig: mb1}
\end{figure}

%

We see, in Figure \ref{fig: asgd1}, how Polyak SGD compares to two schedules of SGD. The first schedule is a typical deep learning schedule that reduces the learning rate every hundred steps by a fixed amount (in this case, a sixth). The second is the optimal schedule~\eqref{LR}. 

In Figure \ref{fig: asgd3}, we share a special case where we start close to an optimal value, which Polyak immediately recognizes by lowering the learning rate.

\begin{figure}
	
	\centerline{
		\includegraphics[width=6.5cm]{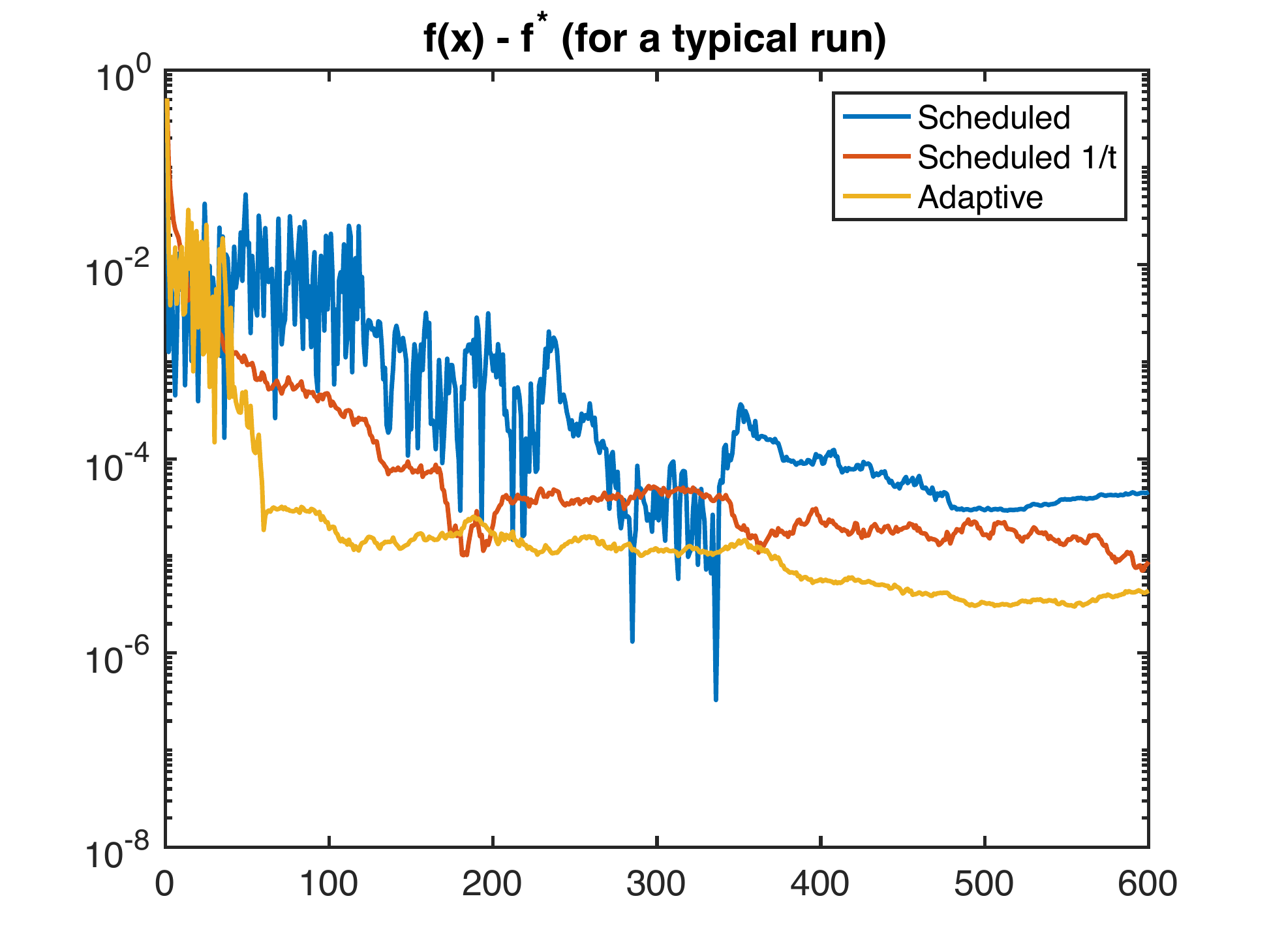}
		\includegraphics[width=6.cm]{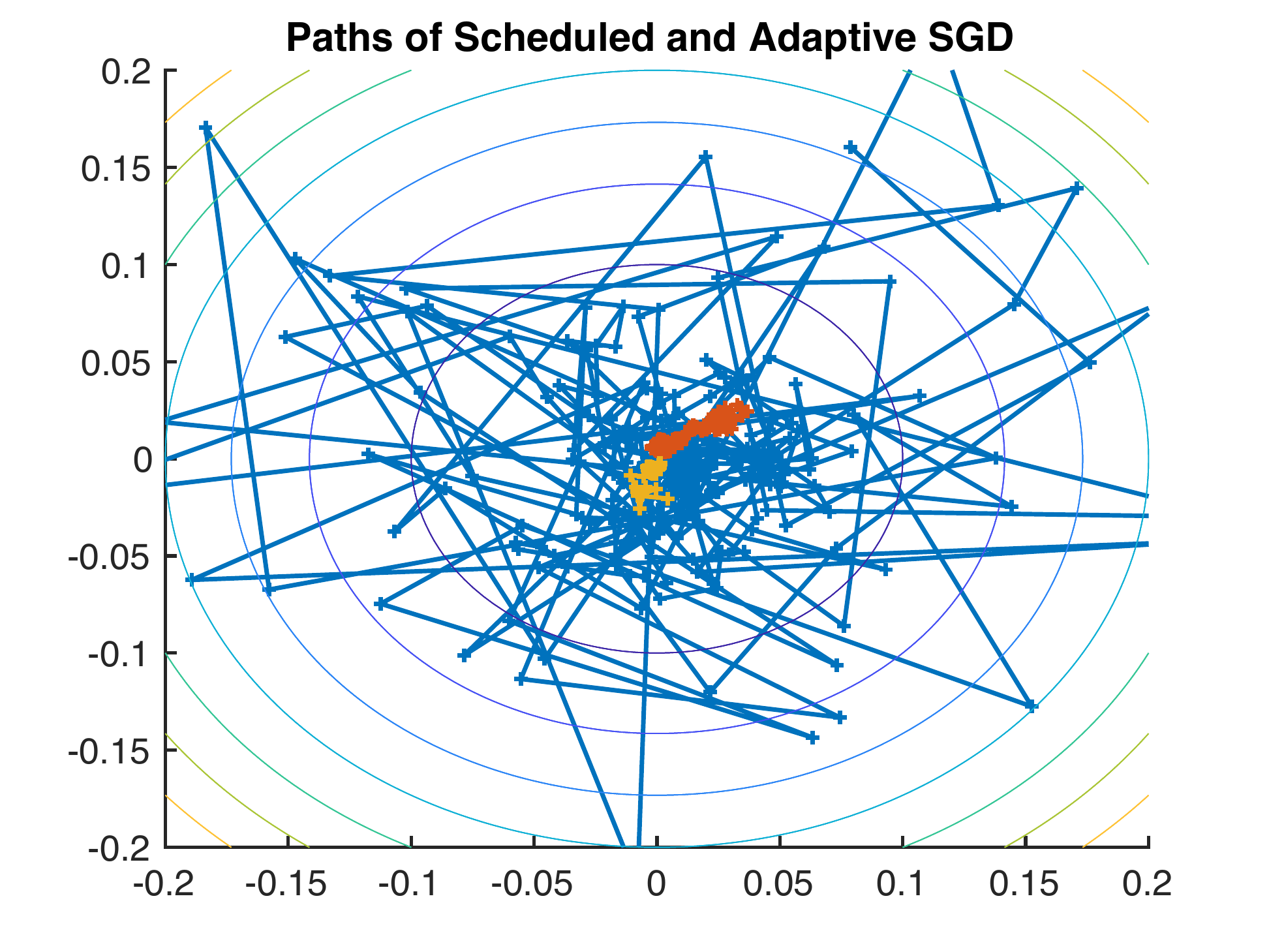}
	}
	\includegraphics[width=6.5cm]{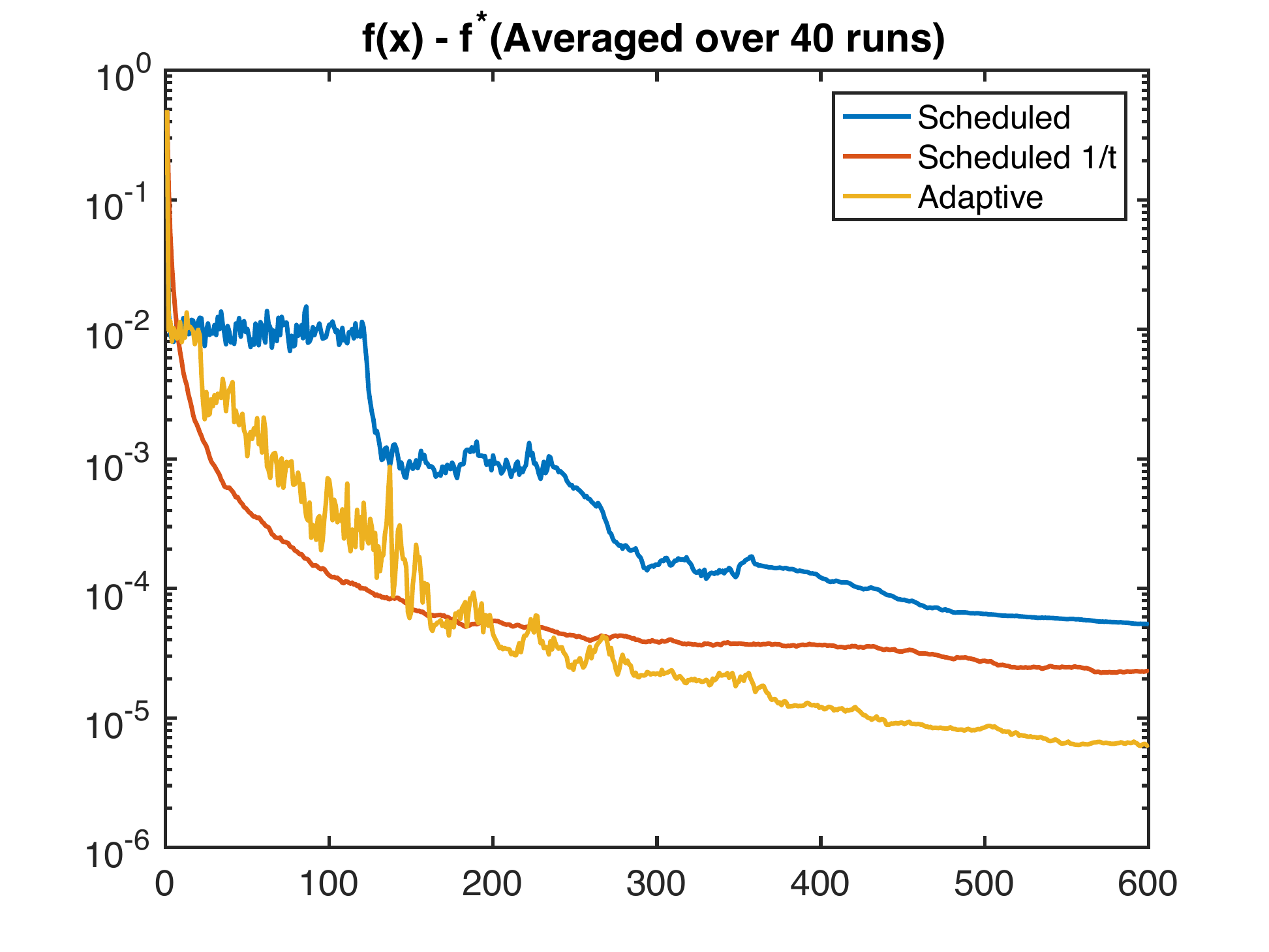}\includegraphics[width=6.cm]{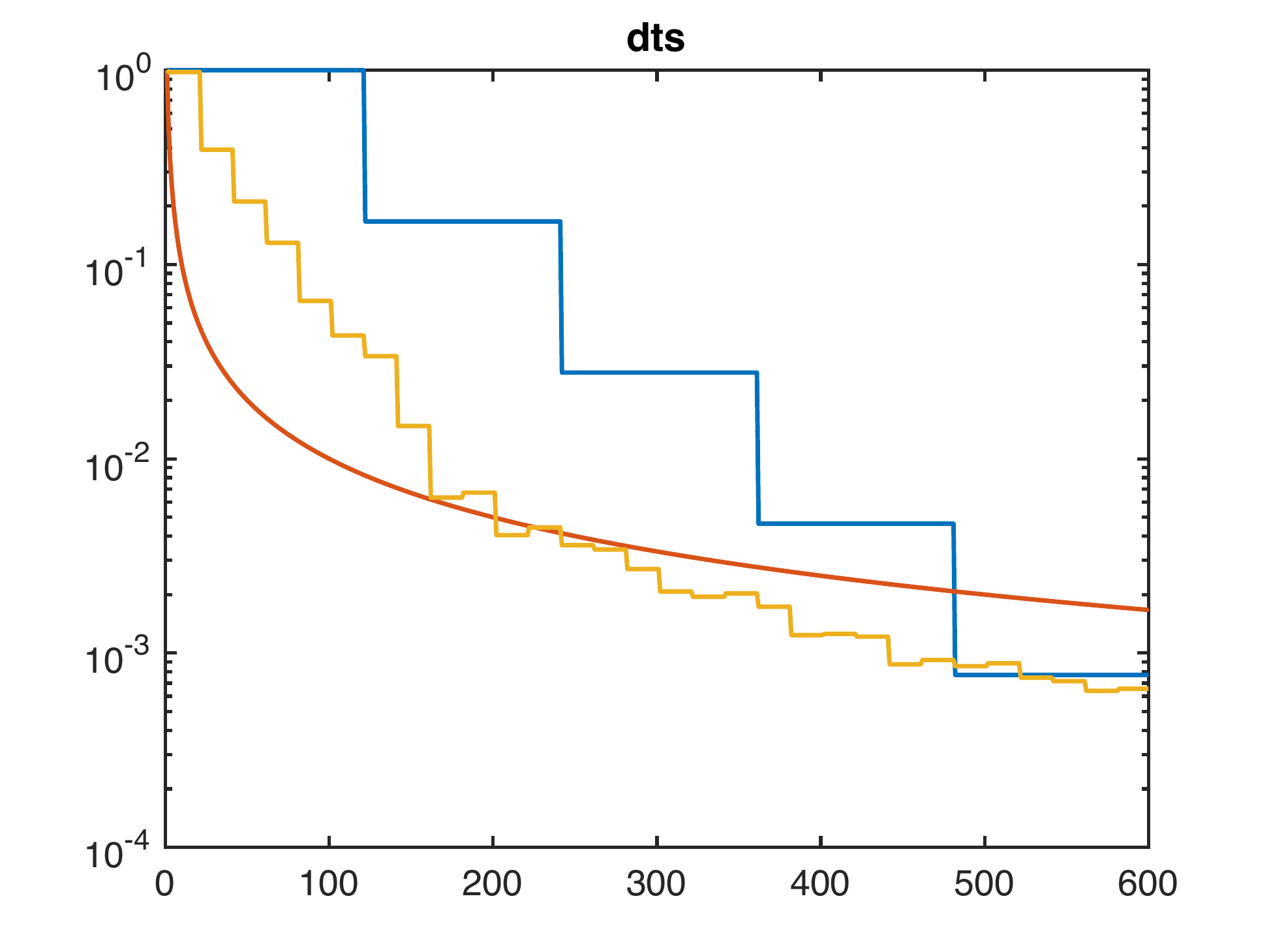}
	\caption{Comparison of SGD with Polyak (adaptive) SGD. Top left: paths of optimally scheduled SGD, epoch scheduled SGD, and Polyak (adaptive) SGD. From one path it is not clear which algorithm is faster. Bottom left: average excess loss over 40 runs of each algorithm.  Now it is clear that the Polyak algorithm is faster on average.  Top Right: plots of a path. Bottom right: illustration of the learning rates computed. 	
	}
	\label{fig: asgd1}
\end{figure}

%
%


\begin{figure}
	
	\centerline{
		\includegraphics[width=5cm]{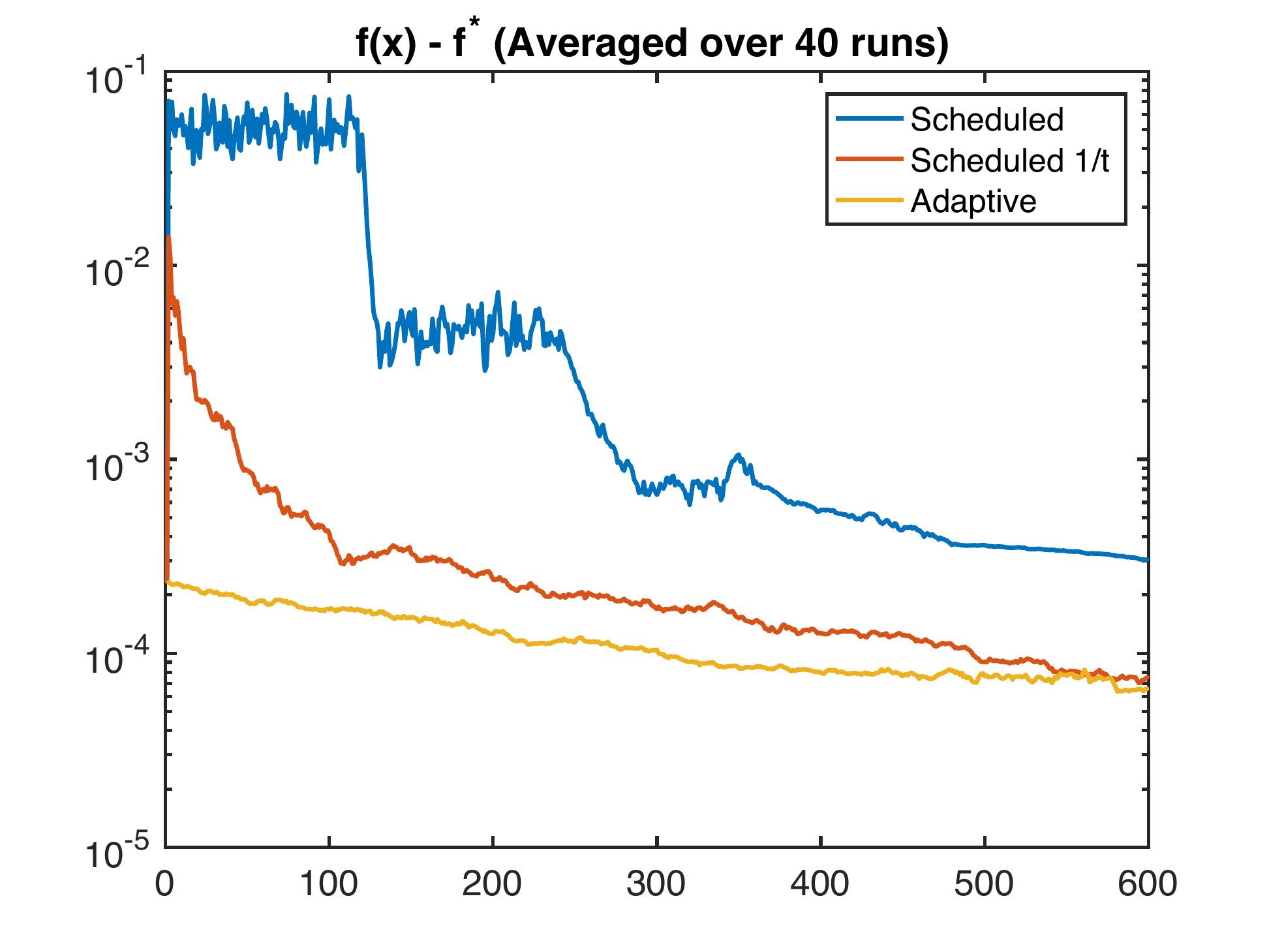}\includegraphics[width=5cm]{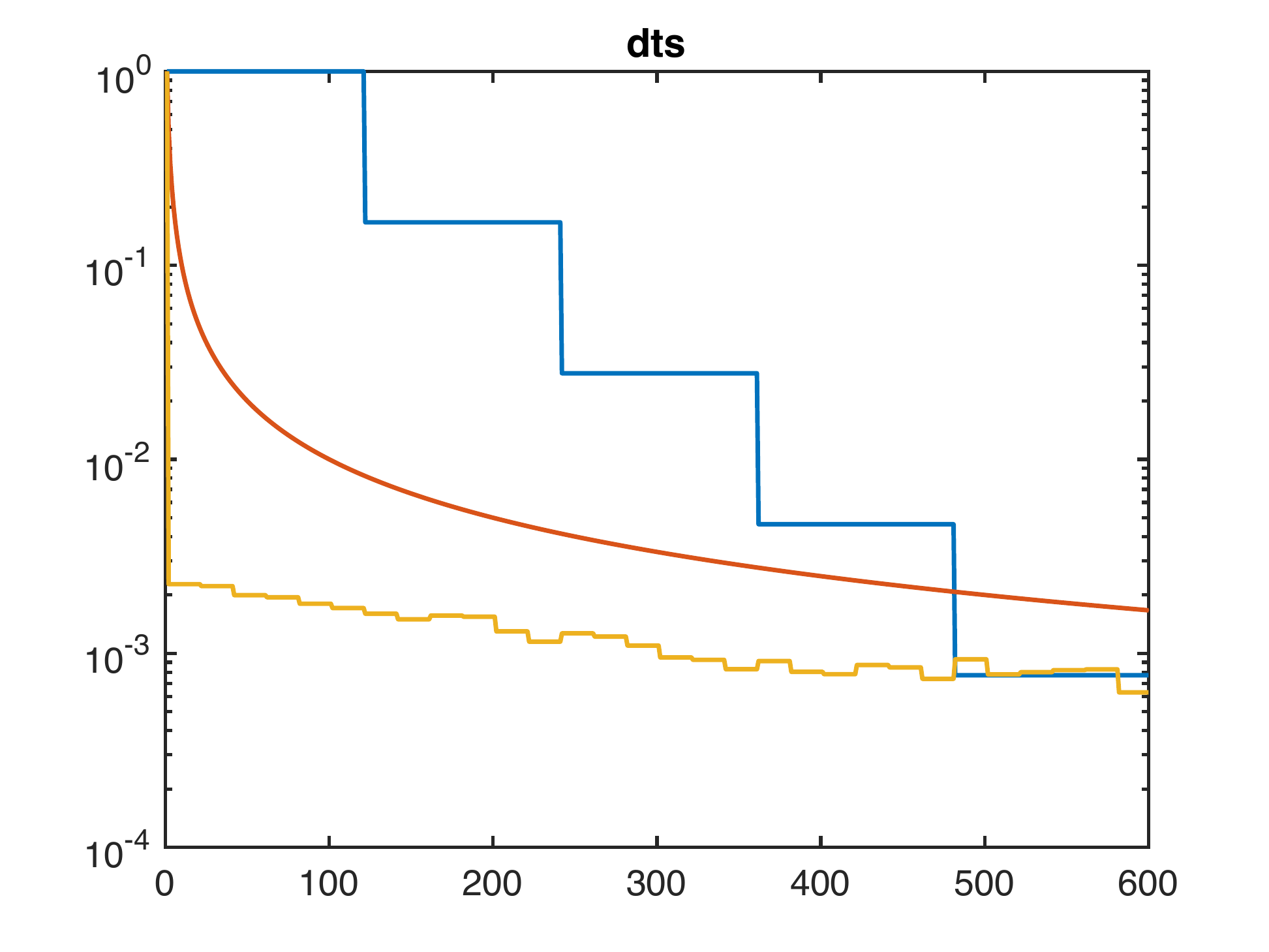}
	}
	\caption{Comparison of SGD with Polyak (adaptive) SGD. 
		In the case where the initial value is close to optimal, (non-optimally) scheduled SGD forgets the good initialization.  On the other hand, the Polyak method detects the good initialization and improves the values. }
\label{fig: asgd3}
\end{figure}


\subsection{Deep Learning Example}
\label{sec : dp}
We trained an AllCNN architecture, \cite{springenberg2014striving}, for image classification on CIFAR-10. Our baseline was trained using SGD and the schedule we used reduces the learning rate by a factor of five every 60 epochs. We used no momentum and no regularization.
In Figure \ref{fig: cifar10}, we see that we obtain similar results in minimizing the training loss without impacting the testing error (the most relevant metric for the success of a neural net training). The schedule of SGD for the CIFAR-10 dataset has already been tuned very well, so it performs similarly to Polyak SGD, which required no tuning. 
The estimate of $f^*$ was obtained with one run of scheduled SGD.  In practice, networks are trained many times using similar parameters, so Polyak SGD could provided some advantages in training time when averaged over many runs. 

\begin{figure}
	
	\centerline{
		\includegraphics[height=6cm]{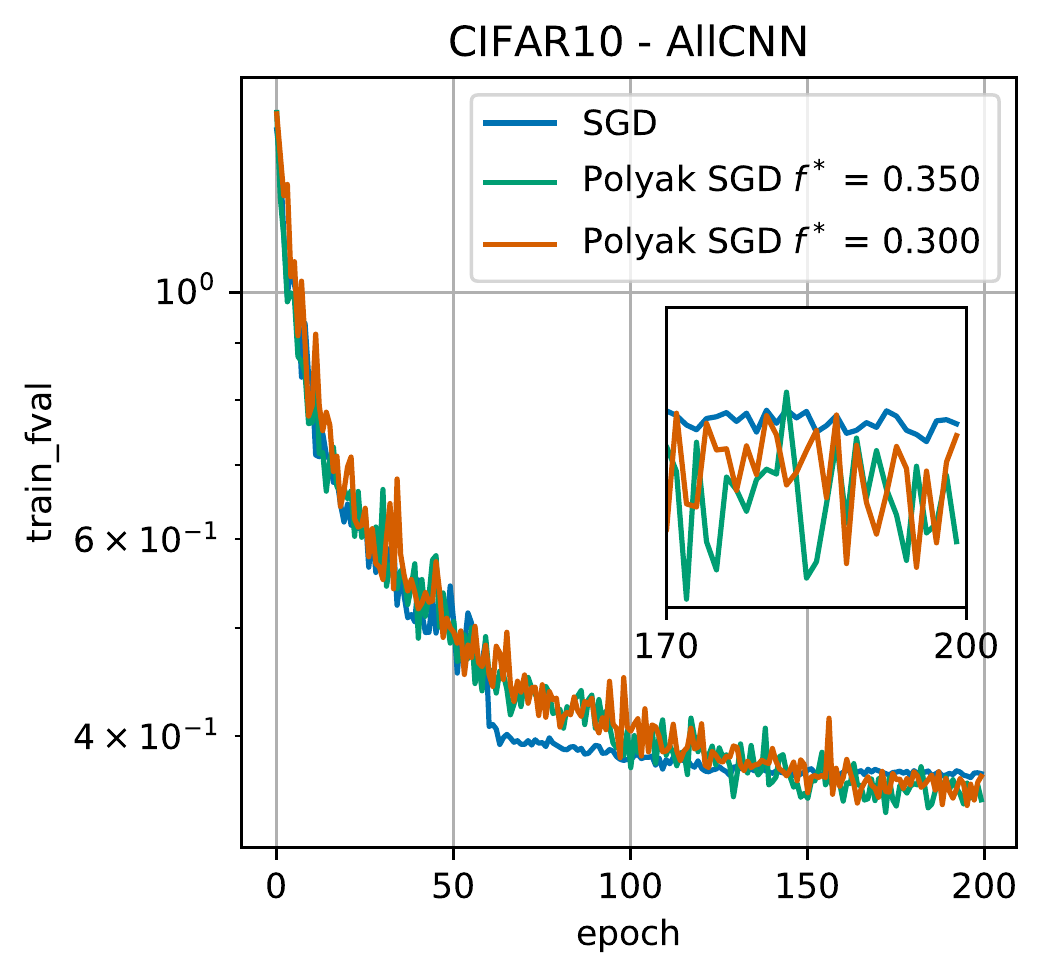}\includegraphics[height=6cm]{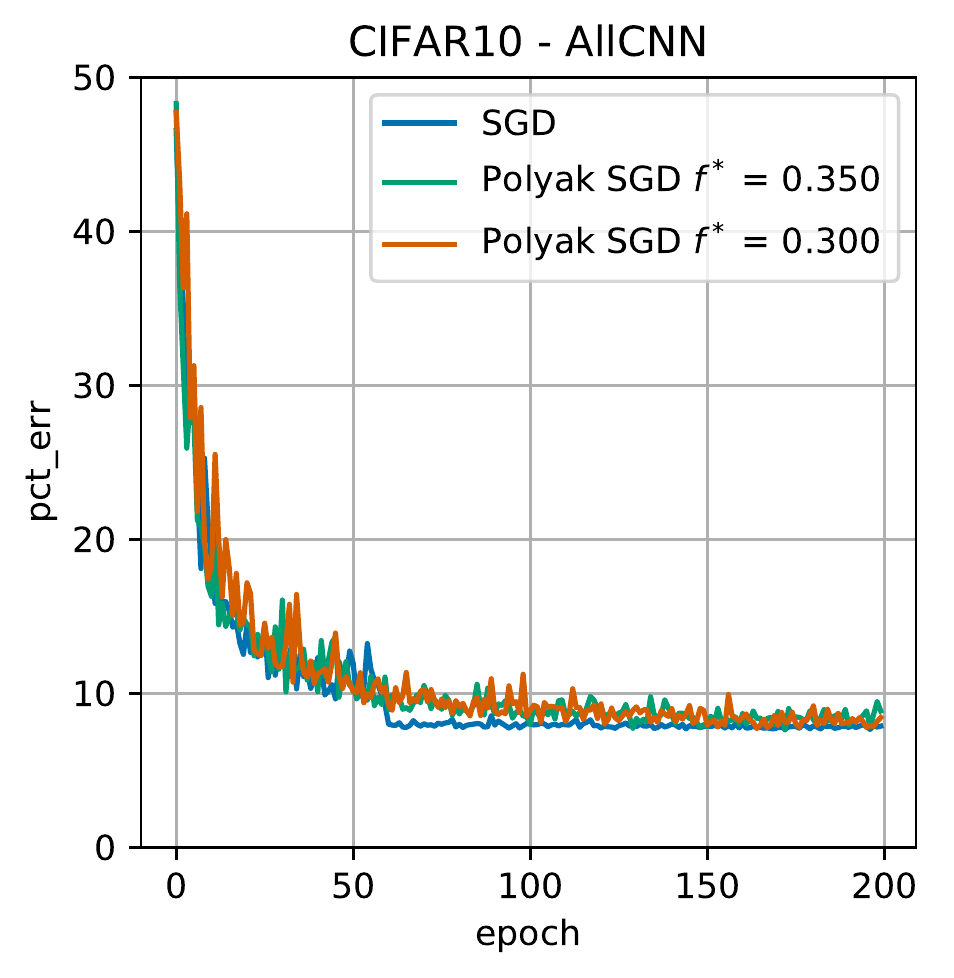}
	}
	\caption{Comparison of scheduled SGD with Polyak SGD. The first plot is the training loss and the second one is the test error.}
	\label{fig: cifar10}
\end{figure}

\bibliographystyle{alpha}
\bibliography{SGD2}

\newcommand{\etalchar}[1]{$^{#1}$}
\begin{thebibliography}{AWBR09}

\bibitem[AWBR09]{agarwal2009information}
Alekh Agarwal, Martin~J Wainwright, Peter~L Bartlett, and Pradeep~K Ravikumar.
\newblock Information-theoretic lower bounds on the oracle complexity of convex
  optimization.
\newblock In {\em Advances in Neural Information Processing Systems}, pages
  1--9, 2009.

\bibitem[BCN16]{bottou2016optimization}
L{\'e}on Bottou, Frank~E Curtis, and Jorge Nocedal.
\newblock Optimization methods for large-scale machine learning.
\newblock {\em arXiv preprint arXiv:1606.04838}, 2016.

\bibitem[Bec17]{beck2017first}
Amir Beck.
\newblock {\em First-Order Methods in Optimization}, volume~25.
\newblock SIAM, 2017.

\bibitem[BM13]{boyd2013subgradient}
Stephen Boyd and Almir Mutapcic.
\newblock Subgradient methods.
\newblock {\em Lecture notes of EE364b, Stanford University, Winter Quarter},
  2013, 2013.

\bibitem[Bot91]{bottou1991stochastic}
L{\'e}on Bottou.
\newblock Stochastic gradient learning in neural networks.
\newblock {\em Proceedings of Neuro-N{\i}mes}, 91(8):12, 1991.

\bibitem[DHS11]{duchi2011adaptive}
John Duchi, Elad Hazan, and Yoram Singer.
\newblock Adaptive subgradient methods for online learning and stochastic
  optimization.
\newblock {\em Journal of Machine Learning Research}, 12(Jul):2121--2159, 2011.

\bibitem[HRS15]{hardt2015train}
Moritz Hardt, Benjamin Recht, and Yoram Singer.
\newblock Train faster, generalize better: Stability of stochastic gradient
  descent.
\newblock {\em arXiv preprint arXiv:1509.01240}, 2015.

\bibitem[HSS12]{hinton2012rmsprop}
Geoffrey Hinton, Nitish Srivastava, and Kevin Swersky.
\newblock Rmsprop: Divide the gradient by a running average of its recent
  magnitude.
\newblock {\em Neural networks for machine learning, Coursera lecture 6e},
  2012.

\bibitem[JZ13]{johnson2013accelerating}
Rie Johnson and Tong Zhang.
\newblock Accelerating stochastic gradient descent using predictive variance
  reduction.
\newblock In {\em Advances in neural information processing systems}, pages
  315--323, 2013.

\bibitem[KAC90]{kim1990variable}
Sehun Kim, Hyunsil Ahn, and Seong-Cheol Cho.
\newblock Variable target value subgradient method.
\newblock {\em Mathematical Programming}, 49(1-3):359--369, 1990.

\bibitem[Kal60]{kalman1960new}
Rudolph~Emil Kalman.
\newblock A new approach to linear filtering and prediction problems.
\newblock {\em Journal of basic Engineering}, 82(1):35--45, 1960.

\bibitem[KB14]{kingma2014adam}
Diederik~P Kingma and Jimmy Ba.
\newblock Adam: A method for stochastic optimization.
\newblock {\em arXiv preprint arXiv:1412.6980}, 2014.

\bibitem[KNJK18]{kidambi2018insufficiency}
Rahul Kidambi, Praneeth Netrapalli, Prateek Jain, and Sham Kakade.
\newblock On the insufficiency of existing momentum schemes for stochastic
  optimization.
\newblock In {\em 2018 Information Theory and Applications Workshop (ITA)},
  pages 1--9. IEEE, 2018.

\bibitem[LJSB12]{lacoste2012simpler}
Simon Lacoste-Julien, Mark Schmidt, and Francis Bach.
\newblock A simpler approach to obtaining an o (1/t) convergence rate for the
  projected stochastic subgradient method.
\newblock {\em arXiv preprint arXiv:1212.2002}, 2012.

\bibitem[Nes13]{nesterov2013introductory}
Yurii Nesterov.
\newblock {\em Introductory lectures on convex optimization: A basic course},
  volume~87.
\newblock Springer Science \& Business Media, 2013.

\bibitem[Pol64]{polyak1964some}
Boris~T Polyak.
\newblock Some methods of speeding up the convergence of iteration methods.
\newblock {\em USSR Computational Mathematics and Mathematical Physics},
  4(5):1--17, 1964.

\bibitem[Pol87]{polyak1987introduction}
Boris~T Polyak.
\newblock Introduction to optimization. translations series in mathematics and
  engineering.
\newblock {\em Optimization Software}, 1987.

\bibitem[QRG{\etalchar{+}}19]{qian2019sgd}
Xun Qian, Peter Richtarik, Robert Gower, Alibek Sailanbayev, Nicolas Loizou,
  and Egor Shulgin.
\newblock Sgd with arbitrary sampling: General analysis and improved rates.
\newblock In {\em International Conference on Machine Learning}, pages
  5200--5209, 2019.

\bibitem[RKK18]{reddi2018convergence}
Sashank~J Reddi, Satyen Kale, and Sanjiv Kumar.
\newblock On the convergence of adam and beyond.
\newblock 2018.

\bibitem[RM51]{robbins1951}
Herbert Robbins and Sutton Monro.
\newblock A stochastic approximation method.
\newblock {\em Ann. Math. Statist.}, 22(3):400--407, 09 1951.

\bibitem[RM18]{rolinek2018l4}
Michal Rolinek and Georg Martius.
\newblock L4: Practical loss-based stepsize adaptation for deep learning.
\newblock In {\em Advances in Neural Information Processing Systems}, pages
  6434--6444, 2018.

\bibitem[RR11]{raginsky2011information}
Maxim Raginsky and Alexander Rakhlin.
\newblock Information-based complexity, feedback and dynamics in convex
  programming.
\newblock {\em IEEE Transactions on Information Theory}, 57(10):7036--7056,
  2011.

\bibitem[RSS11]{rakhlin2011making}
Alexander Rakhlin, Ohad Shamir, and Karthik Sridharan.
\newblock Making gradient descent optimal for strongly convex stochastic
  optimization.
\newblock {\em arXiv preprint arXiv:1109.5647}, 2011.

\bibitem[SDBR14]{springenberg2014striving}
Jost~Tobias Springenberg, Alexey Dosovitskiy, Thomas Brox, and Martin
  Riedmiller.
\newblock Striving for simplicity: The all convolutional net.
\newblock {\em arXiv preprint arXiv:1412.6806}, 2014.

\bibitem[Sho12]{shor2012minimization}
Naum~Zuselevich Shor.
\newblock {\em Minimization methods for non-differentiable functions},
  volume~3.
\newblock Springer Science \& Business Media, 2012.

\bibitem[WRS{\etalchar{+}}17]{wilson2017marginal}
Ashia~C Wilson, Rebecca Roelofs, Mitchell Stern, Nati Srebro, and Benjamin
  Recht.
\newblock The marginal value of adaptive gradient methods in machine learning.
\newblock In {\em Advances in Neural Information Processing Systems}, pages
  4148--4158, 2017.

\bibitem[WWB18]{wu2018wngrad}
Xiaoxia Wu, Rachel Ward, and L{\'e}on Bottou.
\newblock Wngrad: learn the learning rate in gradient descent.
\newblock {\em arXiv preprint arXiv:1803.02865}, 2018.

\bibitem[Zei12]{zeiler2012adadelta}
Matthew~D Zeiler.
\newblock Adadelta: an adaptive learning rate method.
\newblock {\em arXiv preprint arXiv:1212.5701}, 2012.

\bibitem[ZMJ13]{zhang2013linear}
Lijun Zhang, Mehrdad Mahdavi, and Rong Jin.
\newblock Linear convergence with condition number independent access of full
  gradients.
\newblock In {\em Advances in Neural Information Processing Systems}, pages
  980--988, 2013.

\end{thebibliography}

\end{document}